\documentclass[11pt]{amsart}
\usepackage{amsmath, amsthm, amssymb, amscd, graphicx, times, hyperref, amsfonts, epstopdf}
\addtolength{\oddsidemargin}{-.875in}
\addtolength{\evensidemargin}{-.875in}
\addtolength{\textwidth}{1.75in}

\newtheorem{thm}{Theorem}[section]
\newtheorem{cor}[thm]{Corollary}
\newtheorem{lem}[thm]{Lemma}

\theoremstyle{definition}

\theoremstyle{remark}
\newtheorem{rem}[thm]{Remark}

\newtheorem{note}[thm]{Note}
\newtheorem{ex}[thm]{Example}

\numberwithin{equation}{section}



\def\N{{\mathbb N}}
\def\Z{{\mathbb Z}}


\newcommand{\surj}{\twoheadrightarrow}


\def\MCG{{\mathcal MCG}}

\setcounter{tocdepth}{1}

\begin{document}
\title{Separating Pants Decompositions in the Pants Complex}
\author{Harold Sultan}
\address{Department  of Mathematics\\Columbia University\\
New York\\NY 10027}
\email{HSultan@math.columbia.edu}
\date{\today}
\begin{abstract} 
We study the topological types of pants decompositions of a surface by associating to any pants decomposition $P \in \mathcal{P}(S_{g,n})$, in a natural way its \emph{pants decomposition graph}, $\Gamma(P).$  This perspective provides a convenient way to analyze the maximum distance in the pants complex of any pants decomposition to a pants decomposition containing a non-trivial separating curve for all surfaces of finite type, $S_{g,n}.$  In the main theorem we provide an asymptotically sharp approximation of this non-trivial distance in terms of the topology of the surface.  In particular, for closed surfaces of genus $g$ we show the maximum distance in the pants complex of any pants decomposition to a pants decomposition containing a separating curve grows asymptotically like the function $\log(g).$ \end{abstract}
\maketitle

\tableofcontents

\section{Introduction} \label{sec:back}
The large scale geometry of Teichm\"uller space has been an object of interest in recent years, especially within the circles of ideas surrounding Thurston's Ending Lamination Conjecture.  In this context, the pants complex, $\mathcal{P}(S),$ associated to a hyperbolic surface, $S,$ becomes relevant, as by a theorem of Jeff Brock in \cite{brock}, the pants complex is quasi-isometric to the Teichm\"uller space of a surface equipped with the Weil-Petersson metric, $(\mathcal{T}(S),d_{WP}).$  Accordingly, in order to study large scale geometric properties of Teichm\"uller space with the Weil-Petersson metric, it suffices to study the pants complex of a surface.  For instance, significant recent results of Brock-Farb \cite{brockfarb}, Behrstock \cite{behrstock}, Behrstock-Minsky \cite{behrstockminsky}, and Brock-Masur \cite{brockmasur} among others can be viewed from this perspective.

One feature of the coarse geometry of the pants complex in common to many analyses of the subject is the existence of natural quasi-isometrically embedded product regions.  These product regions, which are obstructions to $\delta$-hyperbolicity, correspond to pairs of pants decompositions of the surface containing a fixed non-trivially separating (multi)curve.  In fact, often in the course of studying the coarse geometry of the pants complex it proves advantageous to pass to the net of pants decompositions that contain a non-trivially separating curve, and hence lie in a natural quasi-isometrically embedded product region.  See for instance work of Brock-Masur in \cite{brockmasur} and Behrstock-Drutu-Mosher in \cite{bdm} in which such methods are used to prove that the pants complexes of different complexities are relatively hyperbolic or thick, respectively.  Similarly, work of Masur-Schleimer \cite{masursch}, relies on similar methods to prove the pants complex for large enough surfaces has one end.

In this paper, we study the net of pants decompositions of a surface that contain a non-trivially separating curve within the entire pants complex of a surface.  Specifically, by graph theoretic and combinatoric considerations, we determine the maximum distance in the pants complex of any pants decomposition to a pants decomposition containing a non-trivially separating curve, for all surfaces of finite type, $S_{g,n}.$  The highlight of the paper is captured by following theorem which is a slight simplification of Theorem \ref{thm:main} proven in Section \ref{sec:thm}.
\begin{thm} \label{thm:main2}
Let $S=S_{g,n}$ and set $D_{g,n} = \max_{_{P  \in \mathcal{P}(S) }} (d_{\mathcal{P}(S)} (P, \mathcal{P}_{sep}(S) )).$  Then, for any fixed number of boundary components (or punctures) $n$, $D_{g,n}$ grows asymptotically like the function $\log(g),$ that is $D_{g,n} = \Theta(\log(g)).$  On the other hand, for any fixed genus $g\geq 2, \;\forall n\geq 6g-5,$ $D_{g,n} = 2.$
\end{thm}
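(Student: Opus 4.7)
The pants decomposition graph $\Gamma(P)$ has as vertices the pants of $P$, as internal edges the curves of $P$, and an external half-edge for each boundary component or puncture of $S$. Under this correspondence a non-trivially separating curve of $P$ is precisely an edge of $\Gamma(P)$ whose removal disconnects $\Gamma(P)$ into two components each containing at least two vertices; moreover an elementary move at a curve $c$ amounts to a local ``edge swap'' among the four other edges incident to the two endpoints of $c$. The whole argument is carried out in this graph-theoretic picture.

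For the upper bound $D_{g,n}=O(\log g)$ (fixed $n$): $\Gamma(P)$ is $3$-regular on $2g-2+n$ vertices, so by the Moore bound it contains a cycle $\gamma$ of length $k\le \log_2 g + O(1)$. The plan is to shrink $\gamma$ step by step. A single elementary move on an appropriately chosen edge of $\gamma$ produces a graph in which $\gamma$ has been replaced by a cycle of length $k-1$ (together with a vertex swung off the cycle), and I would choose $\gamma$ to be a shortest cycle so that the successive contractions do not create self-loops or accidentally short-circuit. After $k-2$ such moves one has a length-$2$ cycle; a final move on this multi-edge isolates a one-holed torus (or analogous small-genus subsurface), whose boundary is a non-trivial separating curve. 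Total cost $O(\log g)$.

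The matching lower bound $D_{g,n}=\Omega(\log g)$ is the main technical obstacle. I would realize a $3$-regular graph on $2g-2$ vertices of girth $\ge c\log g$ (for example a cubic Ramanujan graph) as $\Gamma(P_{0})$ for some $P_{0}$. The key lemma to establish is a Lipschitz-type bound: a single elementary move alters $\Gamma$ only in a bounded neighborhood of one edge, so after $m$ moves the girth of the current graph can have decreased by at most a constant times $m$, and no ``deep'' bridge (one with at least two pants on each side) can appear until this local girth has been destroyed. A clean way to formalize this is to exhibit a potential function that detects the absence of deep bridges, bound its Lipschitz constant along edges of $\mathcal{P}(S)$, and show that it takes value $\Omega(\log g)$ at $P_{0}$ and value $0$ on $\mathcal{P}_{\text{sep}}(S)$.

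Finally, for the stable regime $n\ge 6g-5$ with $g\ge 2$, I would prove $D_{g,n}\le 2$ by a case analysis on the distribution of external half-edges of $\Gamma(P)$. The threshold $6g-5$ is tuned so that a pigeonhole argument guarantees the presence of a ``puncture-rich'' substructure from which two elementary moves visibly yield a pants decomposition with a non-trivially separating curve; the constraint ``non-trivial'' (requiring at least two pants on each side of the bridge) is what forces the second move, since one move can at best isolate a single pants bounded by a doubly-punctured disk. For the matching lower bound $D_{g,n}\ge 2$ I would exhibit a concrete $P$---for instance one in which the genus is concentrated in a small connected cluster of pants surrounded by single-puncture pants---for which every neighbor in $\mathcal{P}(S)$ still fails to contain a non-trivially separating curve, and then verify directly that two moves indeed suffice. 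Extracting the precise threshold $6g-5$ is essentially bookkeeping once the graph-theoretic dictionary is in place.
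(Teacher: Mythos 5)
Your upper-bound argument (take a shortest cycle, shrink it by one elementary move at a time, $O(\log g)$ moves by the Moore bound) and your treatment of the stable regime $n\geq 6g-5$ (pigeonhole to find three consecutive low-valence pants, two moves to make a separating curve, explicit examples for the matching lower bound) agree in substance with the paper's Lemma \ref{lem:5move}, Corollary \ref{cor:manypunctures1} and Lemma \ref{lem:girth}. The problem is the lower bound $D_{g,n}=\Omega(\log g)$, which is where the paper's real content lies and where your sketch has a genuine gap.

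You propose tracking the girth of $\Gamma(P)$ as a potential and arguing that a ``deep bridge'' cannot appear until the girth has been destroyed. But girth does not detect bridges: a cubic graph of arbitrarily high girth can contain a non-trivial bridge. For instance, take two copies of a high-girth cubic graph, subdivide one edge in each to produce two valence-two vertices, and join those two vertices by an edge; the new edge is a non-trivially separating bridge while the girth is essentially unchanged. Consequently a girth-based potential does not vanish on $\mathcal{P}_{sep}(S)$, and the Lipschitz-in-girth lemma, even if established, would not produce the lower bound. The obstruction the paper actually isolates (Lemma \ref{lem:cut-set}) is $d_{\mathcal{P}(S)}(P,P') \geq \min\{\mathrm{girth}(\Gamma(P)), d\}-1$, where $d$ is the cardinality of a minimal \emph{non-trivial connected cut-set} of $\Gamma(P)$; the proof shows a separating curve cutting off genus must meet at least $\min\{\mathrm{girth},d\}$ pants of $P$. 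The paper then builds, in Section \ref{sec:construction}, explicit cubic graphs (the ``tower'' construction $\Gamma_{2m}$) with $\mathrm{girth}=\Theta(\log |V|)$ and minimal non-trivial connected cut-set of size $\Theta(\log|V|)$ in every even vertex size. Your idea of using cubic Ramanujan graphs is in principle workable -- expansion forces large cut-sets, and the paper notes this alternative route is realized by Rafi--Tao -- but that is an entirely different justification than a girth-Lipschitz bound, and you do not supply it. Finally, for fixed $n>0$ you must also rule out separating curves that merely cut off boundary components, which requires placing the $n$ low-valence (boundary) vertices pairwise $\Omega(\log g)$ apart; this additional step does not appear in your proposal.
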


The non-trivial lower bounds in Theorem \ref{thm:main2} follow from an original and explicit constructive algorithm for an infinite family of high girth at most cubic graphs with the property that the minimum cardinality of connected cutsets is a logarithmic function with respect to the vertex size of the graphs, \emph{log length connected}.

\begin{rem}
It should be noted that there is a sharp contrast between the nets provided by the subcomplexes $\mathcal{C}_{sep}(S) \subset \mathcal{C}(S)$ and $\mathcal{P}_{sep}(S)\subset \mathcal{P}(S).$  Specifically,  regarding the curve complex, by topological considerations, it is immediate that the distance in the curve complex from any isotopy class of a simple closed curve to a non-trivially separating simple closed curve is bounded above by one, for all surfaces of finite type.  On the other hand, in the case of the pants complex, by Theorem \ref{thm:main2}, the maximal distance from an arbitrary pants decomposition to any pants decompositions containing a non-trivial separating curve is a non-trivial function depending on the topology of the surface.  In fact, for any infinite sequence of surfaces with a uniformly bounded number of boundary components, the function is unbounded.
\end{rem}

A key lemma used in the course of proving the lower bounds in Theorem \ref{thm:main2} and which may be of independent interest is the following:

\begin{lem}\label{lem:cut-set} (Key Lemma)
For $P \in \mathcal{P}(S)$ and $\Gamma(P)$ its pants decomposition graph, let $d$ be the cardinality of a minimal non-trivial connected cut-set $C \subset \Gamma(P)$.  Then
$$d_{\mathcal{P}(S)}(P,P') \geq \min \{ \mbox{girth}(\Gamma(P)),d\}-1$$
for $P'$ any pants decomposition containing a separating curve cutting off genus.
\end{lem}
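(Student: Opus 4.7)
The plan is to translate the separating curve $\sigma \in P'$ into a closed walk on $\Gamma(P)$ and then show that its support either contains a short cycle of $\Gamma(P)$ or gives rise to a small non-trivial connected cut-set. Fix a shortest path $P = P_0, P_1, \ldots, P_k = P'$ with $k = d_{\mathcal{P}(S)}(P,P')$, and let $\sigma \in P'$ be the separating curve cutting off genus. After isotoping $\sigma$ into minimal position with respect to the curves of $P$, the cyclic sequence of pants and curves of $P$ traversed by $\sigma$ defines a closed walk $w_\sigma$ on $\Gamma(P)$ with support $H \subseteq \Gamma(P)$, where $E(H) = \{c \in P : i(\sigma,c)>0\}$.

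Two observations drive the argument. First, $|E(H)| \leq k$: since any two curves in a single pants decomposition are disjoint and $\sigma \in P_k$, each $c \in P \cap P_k$ satisfies $i(\sigma,c) = 0$, so $E(H) \subseteq P \setminus P_k$; and since each elementary move modifies exactly one curve, $|P \setminus P_k| \leq k$. Second, because $\sigma$ is separating we have $[\sigma] = 0$ in $H_1(S;\mathbb{Z}/2)$, which forces every edge of $H$ to be traversed by $w_\sigma$ an even number of times. Moreover $H$ is connected since $w_\sigma$ is a single closed walk.

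Now dichotomize on $H$. If $H$ contains a cycle of $\Gamma(P)$, then $\text{girth}(\Gamma(P)) \leq |E(H)| \leq k$. Otherwise $H$ is a tree, and I construct a non-trivial connected cut-set of $\Gamma(P)$ of size at most $|E(H)|+1$. The curve $\sigma$ partitions the pants of $P$ into three classes: pants entirely on the genus side of $\sigma$ (call them $A$), pants entirely on the other side ($B$), and the mixed pants $M = V(H)$ through which $\sigma$ passes. A careful local analysis at each mixed pant --- of how the arcs of the simple curve $\sigma$ partition its three boundary edges among $A$-, $B$-, and $M$-neighbors --- combined with the tree structure of $H$, produces a connected cut-set $C$ of $\Gamma(P)$ which separates off the genus side of $\sigma$ and has $|C| \leq |E(H)|+1$; this yields $d \leq k+1$. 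In both cases $\min\{\text{girth}(\Gamma(P)), d\} \leq k+1$, which is equivalent to the desired bound $k \geq \min\{\text{girth}(\Gamma(P)), d\}-1$.

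The main obstacle is the cut-set construction in the acyclic case: one must establish both the size bound $|C| \leq |E(H)|+1$ and the connectedness of $C$ as a subgraph of $\Gamma(P)$. This reduces to a careful local analysis using that essential arcs of $\sigma$ in each pair of pants are of two types --- either connecting two distinct boundary components, or returning to one boundary while encircling another --- and then assembling these local constraints consistently along the tree $H$, so that the resulting cut edges share vertices to form a single connected subgraph.
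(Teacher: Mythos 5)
Your proposal follows the same overall architecture as the paper's proof: you build the same auxiliary subgraph $H$ (the paper calls it $[\alpha]$, with vertices the pants $\sigma$ meets and edges the $P$-curves $\sigma$ crosses), observe that $E(H)\subseteq P\setminus P_k$ so $|E(H)|\le k$, use connectedness of $H$ to get $|V(H)|\le k+1$, and then split on whether $H$ contains a cycle. In the cyclic case both proofs bound the girth by $|E(H)|\le k$. The paper handles the acyclic case by contradiction: if $|V(H)|<d$ then $V(H)$, being connected, is not a non-trivial cut-set, so the union $Y$ of the pants in $V(H)$ is a planar subsurface whose complement has at most one essential component, and the paper asserts this forces $\sigma\subset Y$ to fail to cut off genus. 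Your version runs the implication the other way (from $\sigma$ cutting off genus, produce a cut-set), which is logically equivalent. Your $H_1(S;\mathbb{Z}/2)$ observation is true but plays no role downstream and does not appear in the paper.

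The substantive issue is the part you explicitly defer: the ``careful local analysis'' producing a non-trivial connected cut-set $C$ with $|C|\le |E(H)|+1$. The naive candidate $C=V(H)=M$ does \emph{not} work in general. The obstruction is that the pants entirely on one side of $\sigma$ may form a single trivial vertex of $\Gamma(P)\setminus M$ (a lone pair of pants with no loop), and yet that side of $\sigma$ can still have genus: a single pair of pants attached to the planar piece $Y_i$ along two or three curves of $P$ creates a handle even though it contributes only a trivial component of the deletion subgraph. Concretely, one can build $P\in\mathcal{P}(S_{4,5})$ with $\Gamma(P)$ a girth-$5$ graph on a $5$-cycle $v_1v_2v_3v_4Q$ and a $6$-path $k_1\cdots k_6$ joined by four cross-edges, and a separating $\sigma$ cutting off genus with $H$ the path $v_1v_2v_3v_4$; there $\Gamma(P)\setminus V(H)$ has components $\{Q\}$ (trivial) and $\{k_1,\ldots,k_6\}$ (one non-trivial), so $V(H)$ is \emph{not} a non-trivial cut-set. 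A non-trivial connected cut-set of the required size $4$ does exist (e.g.\ $\{v_2,v_3,v_4,k_2\}$), but it is not $V(H)$ and is not obtained by anything ``local'' in $M$; you have to trade a mixed pant for a pant outside $M$. So the assertion that your local analysis yields $|C|\le|E(H)|+1$ is exactly the point that needs proof, and it does not reduce to classifying arc types in individual pairs of pants along the tree $H$ as you suggest. The paper's own proof glosses over the same subtlety (its claim that removing any $U\subset Y$ leaves at most one non-trivial piece plus pairs of pants is not literally correct for $U$ a neighborhood of $\alpha$ in the situation above), so you are in good company, but as written your step (4) is a genuine gap rather than a routine case analysis.
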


The proof of Lemma \ref{lem:cut-set} brings together ideas related to the topology of the surfaces and graph theory in a simple yet elegant manner.

The results of this paper have some overlap with recent results Cavendish-Parlier \cite{cavendish} as well as \cite{rafitao}, the latter of which was posted to the arXiv subsequent to the posting of this article, regarding the asymptotics of the diameter of Moduli Space.  Although similar in nature, the results of this paper are in fact distinct from the aforementioned articles.  Specifically, due to the fact that the quasi-isometry constants of \cite{brock} between the pants complex and Teichm\"uller space equipped with the Weil-Petersson metric are dependent on the topology of the particular surface, the results of this paper are more properly related to complex of cubic graphs than to Moduli Space.  Accordingly, while the results of this paper can be used to consider the diameter of the complex of cubic graphs, they fail to provide direct information regarding the diameter of Moduli Space.  Conversely, while methods in \cite{cavendish} do contain lower bounds on the diameter of entire complex of cubic graphs, this paper focuses on the finer question of the density of a natural subset  inside the entire space.  On the other hand, it should be noted that methods in \cite{rafitao} do provide an independent and alternative (albeit non-constructive) proof of the lower bounds achieved in section \ref{sec:construction} of this paper by considering pants decompositions whose pants decomposition graphs are expanders.  Specifically, reliance on the existence of expander graphs provides for a potential alternative to the construction of log length connected graphs in Section \ref{sec:construction}.  Nonetheless, the explicit and constructive nature of the family of graphs in Section \ref{sec:construction} is a novelty of this paper as compared to \cite{rafitao}.   

The outline of the paper is as follows.  In Section \ref{sec:prelim} we introduce select concepts from graph theory and surface topology relevant to the development in this paper.  In Section \ref{sec:pantsgraph} we consider the pants decomposition graph of a pants decomposition of a surface.  The pants decomposition graph is a graph that is naturally associated to a pants decomposition of a surface which captures the topological type of the pants decomposition.  In Section \ref{sec:thm} we prove Theorem \ref{thm:main2} via a sequence of lemmas and corollaries.  The proof of Theorem \ref{thm:main2} in Section \ref{sec:thm} is complete modulo a construction of an infinite family of high girth, log length connected, at most cubic graphs, which is explicitly described in Section \ref{sec:construction}.  Finally in Section \ref{sec:ex}, the Appendix, some low complexity examples are considered.  
    
\subsection*{Acknowledgements}
$\\$
$\indent$
I want to express my gratitude to my advisors Jason Behrstock and Walter Neumann for their extremely helpful advice and insights throughout my research, and specifically with regard to this paper.  I want to further thank Jason for his thorough reading and comments on this work.  I would also like to acknowledge Maria Chudnovsky and Rumen Zarev for useful discussions regarding particular arguments in this paper.  

\section{Preliminaries} \label{sec:prelim}
\subsection{Graph Theory}
$\\$
$\indent$
Let $\Gamma=\Gamma(V,E)$ be an undirected graph with vertex set $V$ and edge set $E.$  The \emph{degree of a vertex} $v\in V,$ denoted $d(v),$ is the number of times that the vertex $v$ arises as an endpoint in $E.$  The \emph{degree of a graph} $\Gamma,$ denoted $d(\Gamma),$ is $\max\{ d(v) | v \in V\}.$  A graph $\Gamma$ is called \emph{k-regular} if each vertex $v\in V$ has degree exactly k.  In particular, 3-regular graphs are called \emph{cubic graphs}.  Furthermore, a graph $\Gamma$ is said to be \emph{at most cubic} if $d(\Gamma) \leq 3.$  

Given graphs, $\Gamma(V,E),\;  H(V',E'),$ H is called a \emph{subgraph} of $\Gamma,$ denoted $H \subset \Gamma,$ if $V' \subset V$ and $E' \subset E.$  In particular, for any subset $S\subset V(\Gamma),$ the \emph{complete subgraph of $S$ in $\Gamma$}, denoted $\Gamma[S]$, is the subgraph of $\Gamma$ with vertex set $S$ and edges between any pair of vertices $x,y \in S$ if and only if there is an edge $e\in E(\Gamma)$ connecting the vertices $x$ and $y.$  By definition $\Gamma[S] \subset \Gamma$.  As usual, we can make any graph $\Gamma$ into a metric space by endowing the graph with the usual \emph{graph metric}.  Specifically, we assign each edge to have length one, and then define the distance between any two vertices to be the length of the shortest path in the graph connecting the two vertices if the vertices are in the same connected component of $\Gamma,$ or infinity otherwise.  The \emph{diameter} of a  graph, denoted $diam(\Gamma),$ is the maximum of the distance function over all pairs of vertices in $\Gamma \times \Gamma.$  This diameter function can be restricted to subgraphs in the obvious manner.     

Given a graph $\Gamma,$ a \emph{walk} is a sequence of alternating vertices and edges, beginning and ending with a vertex, where each vertex is incident to both the edge that precedes it and the edge that follows it in the sequence.  The \emph{length} of a walk is the number of vertices in the walk.  A \emph{cycle} is a closed walk in which all edges and all vertices other than first and last are distinct.  A \emph{loop} is a cycle of length one.  A graph $\Gamma$ is \emph{acyclic} if it contains no cycles, i.e. its connected components are \emph{trees}.  The \emph{girth} of a graph $\Gamma$ is defined to be the length of a shortest cycle in $\Gamma,$ unless $\Gamma$ is acyclic, in which case the \emph{girth} is defined to be infinity.

A graph $\Gamma$ is \emph{connected} if there is a walk between any two vertices of the graph.  Otherwise, it is said to be \emph{disconnected}.  If a subset of vertices, $C\subset V,$ has the property that the \emph{deletion subgraph}, $\Gamma[V\setminus C]$, is disconnected, then $C$ is called a \emph{cut-set} of a graph.  If the deletion subgraph $\Gamma[V\setminus C]$, is disconnected and moreover it has at least two connected components each consisting of at least two vertices or a single vertex with a loop, $C$ is said to be a \emph{non-trivial cut-set}.  A (nontrivial) cut-set $C$ is called a \emph{minimal sized (non-trivial) cut-set} if $|C|$ is minimal over all (non-trivial) cut-sets of $\Gamma.$  On the other hand, a cut-set $C$ is said to be a \emph{minimal (non-trivial) connected cut-set} if $|C|$ is minimal over all (non-trivial) cut-sets $C$ of $\Gamma$ such that $\Gamma[C]$ is connected.  

In this paper we are interested in a family of graphs that are robust with regard to non-trivial disconnection by the removal of connected cut-sets.  More formally, we define an infinite family of graphs, $\Gamma_{i}(V_{i},E_{i})$, with increasing vertex size to be \emph{log length connected} if they have the property that the size of minimal non-trivial connected cut-sets of the graphs, asymptotically grows logarithmically in the vertex size of the graphs.  Specifically, if we set the function $f(i)$ to be equal to the cardinality of a minimal non-trivial connected cut-set of the graph $\Gamma_{i},$ then $f(i)= \Theta(\log(|V_{i}|)).$ 

\begin{ex}($(3,g)$-cages)
\label{ex:cage}
In the literature on graph theory, a family of graphs called \emph{$(3,g)$-cages} are a well studied, although not very well understood family of graphs.  By definition a $(k,g)$-cage is a graph of minimum vertex size among all $k$-regular graphs with girth $g.$  Note that $(k,g)$-cages need not be unique, and generally are not.  In \cite{erdos} it is shown that for $k\geq 2, g \geq 3,$ there exist $(k,g)$-cages.  Moreover if we let $\mu(g)$ represent the number of vertices in a $(3,g)$-cage, then it is well known that $2^{g/2} \leq \mu(g) \leq 2^{3g/4},$ see \cite{biggs}.  Furthermore, a theorem of Jiang and Mubayi, guarantees that the cardinality of a minimal non-trivial connected cut-set of a $(3,g)$-cage is at least $\lfloor  \frac{g}{2}  \rfloor.$  Combining the two previous sentences it follows that the family of $(3,g)$-cages are log length connected.  
\end{ex}
\subsection{Curve and Pants Complex}
$\\$
$\indent$ Given any surface of finite type, $S=S_{g,n},$ that is a genus $g$ surface with $n$ boundary components (or punctures), the \emph{complexity} of $S,$ denoted $\xi(S)\in \Z,$ is a topological invariant defined to be $3g-3+n.$  To be sure, while in terms of the $\MCG$ there is a distinction between boundary components of a surface and punctures on a surface, as elements of the $\MCG$ must fix the former, yet can permute the latter, for the purposes of this paper such a distinction is not relevant.  Accordingly, throughout this paper while we will always refer to surfaces with boundary components, the same results hold mutatis mutandis for surfaces with punctures.

A simple closed curve in $S$ is \emph{peripheral} if it bounds a disk containing at most one boundary component; a non-peripheral curve is \emph{essential}.  For $S$ any surface with positive complexity, the \emph{curve complex} of $S,$ denoted $\mathcal{C}(S),$ is the simplicial complex obtained by associating to each isotopy class of an essential simple closed curve a 0-cell, and more generally a k-cell to each unordered tuple $\{\gamma_{0}, ..., \gamma_{k} \}$ of $k+1$ isotopy classes of disjoint essential simple closed curves, or \emph{multicurves}.  This simplicial complex first defined by Harvey \cite{harvey} has many natural applications to the study of the $\MCG$ and is a well studied complex in geometric group theory.

Among simple closed curves on a surface of finite type we differentiate between two types of curves.  Specifically, a simple closed curve $\gamma \subset S$ is called a \emph{non-trivially separating curve}, or simply a \emph{separating curve}, if $S \setminus \gamma$ consists of two connected components $Y_{1}$ and $Y_{2}$ such that  $\xi(Y_{i}) \geq 1.$  Any other simple closed curve is \emph{non-separating}.  It should be stressed that, perhaps counterintuitively, a \emph{trivially separating curve}, that is a simple closed curve that cuts off two boundary components of the surface, under our definition, is considered a non-separating curve.  In light of the dichotomy between separating curves and non-separating curves, there is an important natural subcomplex of the curve complex called the \emph{complex of separating curves}, denoted $\mathcal{C}_{sep}(S),$ which is the restriction of the curve complex to the set of separating curves. 

For $S$ a surface of positive complexity, a \emph{pair of pants decomposition}, or simply a \emph{pants decomposition}, $P$ is a multicurve of maximal cardinality.  Equivalently, a pants decomposition $ 
P$ is a set of disjoint homotopically distinct curves such that the complement $S  \setminus P$ consists of a disjoint union of topological \emph{pairs of pants}, or spheres with three boundary components.  

Related to the curve complex, $\mathcal{C}(S),$ there is another natural complex associated to any surface of finite type with positive complexity: the \emph{pants complex}.  In particular, the 1-skeleton of the pants complex, the \emph{pants graph}, denoted $\mathcal{P}(S),$  is a graph with vertices corresponding to different pants decompositions of the surface, and edges between two vertices when the two corresponding pants decompositions differ by a so called \emph{elementary pants move}.  Specifically, two pants decompositions of a surface differ by an elementary pants move, if the two decompositions differ in exactly one curve and those differing curves intersect minimally inside the unique complexity one component of the surface, topologically either an $S_{0,4}$ or an $S_{1,1},$ in the complement of all the other agreeing curves in the pants decompositions.  By a theorem of Hatcher and Thurston, \cite{hatcher}, the pants graph is connected, and hence we have a notion of distance between different vertices, or pants decompositions $P_{1},P_{2} \in \mathcal{P}(S),$ obtained by endowing $\mathcal{P}(S)$ with the graph metric.  We denote this distance by  $d_{\mathcal{P}}(P_{1}, P_{2}).$  

Just as with the curve complex, there is an important subcomplex of the pants complex called the \emph{pants complex of separating curves}, denoted $\mathcal{P}_{sep}(S),$ which is the restriction of the pants graph to the set of those pants decompositions that contain a separating curve.  This paper analyzes the net of the pants complex of separating curves in the entire pants complex, for all surfaces of finite type.  

\section{Pants Decomposition Graph} \label{sec:pantsgraph}
By elementary topological considerations, it follows that for any pants decomposition $P \in P(S_{g,n}),$ the number of curves in the pants decomposition $P$, is equal to $\xi(S)=3g-3+n,$ while the number of pairs of pants into which the pants decomposition decomposes the surface is equal to $2(g-1)+n.$  Corresponding to any pants decomposition $P$ we define its \emph{pants decomposition graph}, $\Gamma(P),$ as follows: For $P \in \mathcal{P}(S),$ $\Gamma(P)$ is a graph with vertices corresponding the connected components of $S \setminus P,$ and edges between vertices corresponding to connected components that share a common boundary curve.  See Figure \ref{fig:pants1} for an example of a pants decomposition graph.  Pants decomposition graphs classify pants decompositions up to topological type.  Specifically, two pants decompositions have the same pants decomposition graph if and only if they divide the surface in the same topological manner, or equivalently the two pants decompositions differ by an element of the mapping class group.  

\begin{figure}[htpb]
\centering
\includegraphics[height=4 cm]{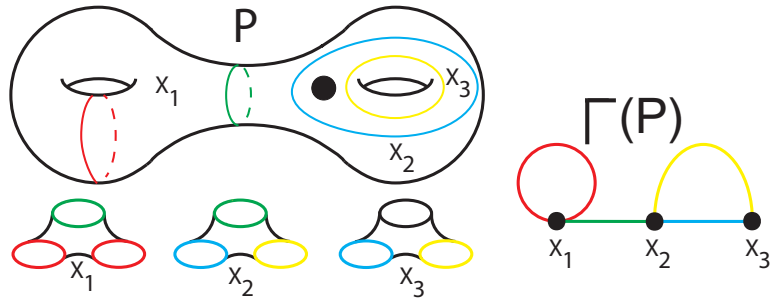}
\caption{$\Gamma(P)$ for $P \in P(S_{2,1})$. }\label{fig:pants1}
\end{figure}

\begin{rem}  The notion of pants decomposition graphs is considered in \cite{buser} as well as in \cite{parlier}.  Moreover, replacing the vertices in a \emph{pants decomposition graph} with edges and vice versa yields the \emph{adjacency graph} of Behrstock and Margalit \cite{behrstockmargalit} developed in the course of proving that the mapping class group is co-Hopfian with regard to finite index subgroups.
\end{rem}

The following elementary lemma, whose proof follows immediately, organizes elementary properties of $\Gamma(P)$ and gives a one to one correspondence between certain graphs and pants decomposition graphs:
\begin{lem}\label{lem:properties}
For $P \in \mathcal{P}(S_{g,n}),$ and $\Gamma(P)$ its pants decomposition graph:
\begin{enumerate}
\item $\Gamma(P)$ is a connected graph with $2(g-1)+n$ vertices and $3(g-1)+n$ edges
\item $\Gamma(P)$ is at most cubic
\end{enumerate}
Moreover, for all $q,p \in \N$, given any connected, at most cubic graph $\Gamma=\Gamma(V,E)$ with $|V|=2(p-1)+q$ and $|E|=3(p-1)+q,$ there exists a pants decomposition $P \in \mathcal{P}(S_{p,q})$ with pants decomposition graph $\Gamma(P) \cong \Gamma.$   
\end{lem}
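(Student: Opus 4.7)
For the forward direction I would argue from the local structure of a pants decomposition. By the additivity of Euler characteristic and the fact that $\chi(\text{pants}) = -1$, the complement $S\setminus P$ consists of exactly $-\chi(S_{g,n}) = 2(g-1)+n$ pairs of pants. Each such pant has three boundary circles; each of these is either a curve of $P$, shared with exactly one other pant (possibly the same pant, in the case of a self-gluing), or else a boundary component of $S$. This immediately gives (2), and it also shows that the edges of $\Gamma(P)$ are in bijection with the curves of $P$, yielding $|E| = \xi(S) = 3g-3+n = 3(g-1)+n$. Connectedness of $\Gamma(P)$ follows from that of $S$: any two pants are joined by a chain of adjacent pants.

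For the converse I would construct $(S,P)$ by reversing the process. Given a connected, at most cubic graph $\Gamma$ with $|V|=2(p-1)+q$ and $|E|=3(p-1)+q$, take $|V|$ disjoint oriented copies of a pair of pants, one per vertex, with the three boundary circles of each pant labeled. For each edge $e=\{u,v\}$ of $\Gamma$ (allowing loops with $u=v$), pick an as-yet-unused boundary circle of the pant at $u$ and one at $v$ and identify them by an orientation-reversing homeomorphism. Since $d(v)\leq 3$ at every vertex, there are always enough boundary circles to carry out this process, and exactly $3|V|-2|E| = q$ boundary circles remain unglued.

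To finish I would verify that the resulting surface is as advertised. Orientability is guaranteed by using orientation-reversing gluings; connectedness follows from connectedness of $\Gamma$; and the Euler characteristic is $-|V| = 2-2p-q$, so the resulting closed-up surface is $S_{p,q}$. The identified curves form a pants decomposition $P$, and by construction $\Gamma(P)\cong \Gamma$.

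As the lemma is flagged as elementary, there is no substantial obstacle. The only mildly delicate point is the orientability bookkeeping in the gluing step (making sure that self-gluings and multi-edges are performed in an orientation-reversing way), but this is routine and presents no genuine difficulty.
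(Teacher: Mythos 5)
Your proof is correct. The paper itself gives no written proof --- it introduces the lemma as ``elementary, whose proof follows immediately'' --- and your argument is exactly the standard one that phrase is gesturing at: Euler characteristic additivity ($\chi(S_{g,n}) = -|V|$) and counting boundary circles of pairs of pants for the forward direction, and assembling pairs of pants along the edges of $\Gamma$ (with the check $3|V|-2|E|=q$) for the converse, with a routine orientability remark about the gluings.
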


Euler characteristic considerations imply the following corollary of Lemma \ref{lem:properties}:
\begin{cor} \label{cor:fundamental}
For $P \in \mathcal{P}(S_{g,n}),$ $\pi_{1}(\Gamma(P))$ is the free group of rank $g.$
\end{cor}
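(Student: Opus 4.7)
The plan is to exploit the well-known fact that the fundamental group of any connected graph is free of rank equal to its first Betti number, namely $|E|-|V|+1$. Since Lemma \ref{lem:properties}(1) already gives us that $\Gamma(P)$ is connected with $|V|=2(g-1)+n$ and $|E|=3(g-1)+n$, the computation of the rank is immediate.

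Concretely, I would first recall (or cite standard algebraic topology) that for a connected graph $\Gamma$, a spanning tree $T \subset \Gamma$ is simply connected and $\Gamma$ deformation retracts onto a wedge of circles obtained by collapsing $T$; there is exactly one such circle for each edge of $\Gamma$ not in $T$. Since a spanning tree on $|V|$ vertices has $|V|-1$ edges, the number of edges outside $T$ is $|E|-|V|+1$. Thus $\pi_1(\Gamma(P))$ is free of rank $|E|-|V|+1$.

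Plugging in the values from Lemma \ref{lem:properties}, we obtain
\[
|E(\Gamma(P))| - |V(\Gamma(P))| + 1 = \bigl(3(g-1)+n\bigr) - \bigl(2(g-1)+n\bigr) + 1 = (g-1)+1 = g,
\]
as desired. Equivalently, this is a restatement of the Euler characteristic identity $\chi(\Gamma(P)) = |V|-|E| = 1-g$, which is the genesis of the corollary's name.

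There is no real obstacle here; the only thing to verify is that the formulas from Lemma \ref{lem:properties} are applied correctly and that one genuinely has connectedness (so that the graph is homotopy equivalent to a single wedge of $g$ circles rather than a disjoint union). Both are guaranteed by Lemma \ref{lem:properties}(1), so the corollary follows from a one-line arithmetic computation.
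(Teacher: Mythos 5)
Your proof is correct and matches what the paper intends: the paper states only that the corollary follows from ``Euler characteristic considerations'' applied to Lemma \ref{lem:properties}, and your spanning-tree/Betti-number argument with the computation $|E|-|V|+1 = (3(g-1)+n)-(2(g-1)+n)+1 = g$ is exactly that argument made explicit.
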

Another relevant elementary lemma is the following:
\begin{lem} \label{lem:projpants}
 Let $P \in \mathcal{P}(S_{g,n}),$ and let $\pi_{\mathcal{C}} \colon \mathcal{C}(S_{g,n}) \surj \mathcal{C}(S_{g,n-1}) \cup \emptyset$ be a projection map which fills in a boundary component.  Then the map $\pi$ extends to a surjection $$ \pi_{\mathcal{P}} \colon \mathcal{P}(S_{g,n}) \surj \mathcal{P}(S_{g,n-1}).$$  
 \end{lem}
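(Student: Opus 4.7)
The plan is to define $\pi_{\mathcal{P}}$ at the vertex level (one pants decomposition at a time) by applying $\pi_{\mathcal{C}}$ curve-by-curve and discarding the resulting isotopic duplicates, then to verify that the output is a pants decomposition of $S_{g,n-1}$ and that the map is surjective. Let $\beta$ be the boundary component of $S_{g,n}$ filled in by $\pi_{\mathcal{C}}$.

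First, for any $P\in\mathcal{P}(S_{g,n})$ I would locate the unique pair of pants $Y \subset S_{g,n}\setminus P$ having $\beta$ as one of its three boundary circles, and denote the other two boundaries of $Y$ by $\gamma_{2}$ and $\gamma_{3}$ (each of which is either a curve of $P$ or another component of $\partial S_{g,n}$). Because capping $\beta$ converts $Y$ into an annulus $S_{0,2}$, the curves $\gamma_{2}$ and $\gamma_{3}$ become freely isotopic in $S_{g,n-1}$ (or one becomes peripheral to a boundary component of $S_{g,n-1}$, when the other lies in $\partial S_{g,n}$). I would then set $\pi_{\mathcal{P}}(P)$ equal to the image of $P$ under $\pi_{\mathcal{C}}$ applied curve-by-curve, with exactly one of the now-redundant classes $\gamma_{2},\gamma_{3}$ discarded. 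This gives $|\pi_{\mathcal{P}}(P)|=|P|-1=3g-4+n=\xi(S_{g,n-1})$.

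Next, to check $\pi_{\mathcal{P}}(P)\in\mathcal{P}(S_{g,n-1})$, I would observe that $S_{g,n-1}\setminus \pi_{\mathcal{P}}(P)$ is obtained from $S_{g,n}\setminus P$ by merging $Y$ (now an annulus) with the neighboring pair of pants across the discarded curve. A direct topological check shows that gluing an annulus to a pair of pants along one boundary circle again yields a pair of pants, so this merge preserves the defining property of a pants decomposition. Combined with the curve count, this ensures $\pi_{\mathcal{P}}(P)$ is a valid vertex of $\mathcal{P}(S_{g,n-1})$. For surjectivity, given $P^{*}\in\mathcal{P}(S_{g,n-1})$ I would build an explicit preimage by reversing the operation: choose any pair of pants $Y^{*}\subset S_{g,n-1}\setminus P^{*}$, remove an open disk from its interior to re-introduce $\beta$, and add a single curve $\alpha$ in the resulting four-holed sphere that separates $\beta$ together with one of the original boundaries of $Y^{*}$ from the remaining two. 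Then $P:=P^{*}\cup\{\alpha\}$ is a pants decomposition of $S_{g,n}$ with $\pi_{\mathcal{P}}(P)=P^{*}$ by construction.

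The main obstacle I anticipate is simply the case bookkeeping for $(\gamma_{2},\gamma_{3})$: (i) both interior curves of $P$, (ii) exactly one interior and the other in $\partial S_{g,n}$, (iii) both in $\partial S_{g,n}$, and (iv) $\gamma_{2}=\gamma_{3}$ as curves in $S_{g,n}$ (a self-loop at $v_Y$ in the pants decomposition graph $\Gamma(P)$). Cases (iii) and (iv) force $S_{g,n}\cong S_{0,3}$ or $S_{1,1}$ respectively, for which $\mathcal{P}(S_{g,n-1})=\emptyset$ and the statement is vacuous. Cases (i) and (ii) are handled uniformly by the discussion above, so once the degenerate configurations are dispatched, $\pi_{\mathcal{P}}$ is well-defined and surjective as claimed.
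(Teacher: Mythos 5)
Your proof is correct and follows essentially the same route as the paper: cap off $\beta$, observe that exactly one pair of pants degenerates to an annulus so that its two non-$\beta$ boundary curves become isotopic (or peripheral), identify them to obtain a pants decomposition one curve shorter, and build an explicit lift for surjectivity. You spell out the curve count and the degenerate cases (iii) and (iv) that the paper leaves implicit, but the underlying argument is the same.
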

 
 \begin{rem}
 Note that the map $\pi_{\mathcal{C}}$ has range $\mathcal{C}(S_{g,n-1}) \cup \emptyset$ as an essential curve that cuts off a pair of boundary components can become peripheral in the event that one of the cut off boundary components is filled in.
 \end{rem}
 
\begin{proof}
Under the map $\pi_{\mathcal{P}},$ all but one of the pairs of pants in a pants decomposition of $S_{g,n}$ are left unaffected.  The one affected pair of pants, which contains the boundary component being filled, becomes an annulus in $S_{g,n-1}.$  After identifying the two isotopic boundary curves of the annulus in $S_{g,n-1},$ we have a pants decomposition of $S_{g,n-1}.$  The fact that the projection $\pi_{\mathcal{P}}$ is surjective follows the observation that given any pants decomposition of $S_{g,n-1},$ one can easily construct a lift under $\pi_{\mathcal{P}}$ of the pants decomposition in $S_{g,n}.$
\end{proof}
In the next three subsections we explore certain aspects of pants decomposition graphs.

\subsection{Calculus of elementary pants moves and their action on pants decomposition graphs.}  \label{subsec:calc} Recall that there are two types of elementary pants moves depending on the type of complexity one piece in which the move takes place: 
\begin{description}
\item[E1] Inside a $S_{1,1}$ component of the surface in the complement of all of the pants curves except $\alpha,$ the curve $\alpha$ is replaced with $\beta$ where $\alpha$ and $\beta$ intersect once.
\item[E2]  Inside a $S_{0,4}$ component of the surface in the complement of all of the pants curves except $\alpha,$ the curve $\alpha$ is replaced with $\beta$ where $\alpha$ and $\beta$ intersect twice. 
\end{description}
Elementary move E1 has a trivial action on the pants decomposition graph $\Gamma(P),$ while the impact of the elementary move E2 can be described as follows: identify any two adjacent vertices, $v_{1},v_{2}$ in the pants decomposition graph connected by an edge $e,$ then the action of an elementary move E2 on the pants decomposition graph has the effect of interchanging any edge other than $e$ impacting $v_{1},$ or possibly the empty set, with any edge other than $e,$ impacting $v_{2},$ or possibly the empty set.  The one stipulation is that in the event that the empty set is being interchanged with an edge, the result of the action must yield a connected at most cubic graph.  An example of the action is presented in Figure \ref{fig:pantsmove}.  

\begin{figure}[bhtp]
\centering
\includegraphics[height=4 cm]{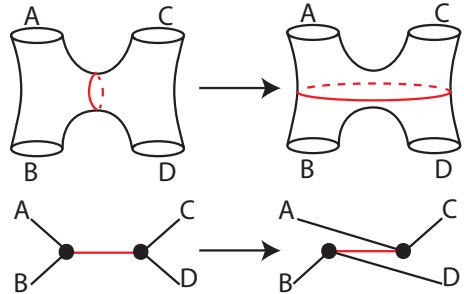}
\caption{An example of the action of an elementary pants move E2 on the pants decomposition graph.}\label{fig:pantsmove}
\end{figure}

\subsection{Adding boundary components}   \label{subsec:addingpunctures} 
Along the lines of the proof of Lemma \ref{lem:projpants}, note that any pants decomposition of $S_{g,n+1}$ can be obtained by beginning with a suitable pants decomposition of $S_{g,n}$, adding a boundary component appropriately, and then appropriately completing the resulting multicurve into a pants decomposition of $S_{g,n+1}$.  The effect that this process of adding a boundary component has on the pants decomposition graph has two forms, depending on whether topological pair of pants to which the boundary component is being added contains a boundary component of the ambient surface or not, as well as the manner in which the multicurve is completed into a pants decomposition of the resulting surface.  The two forms are depicted in Figure \ref{fig:addpuncture}.  

\begin{figure}[htpb]
\centering
\includegraphics[height=3.5 cm]{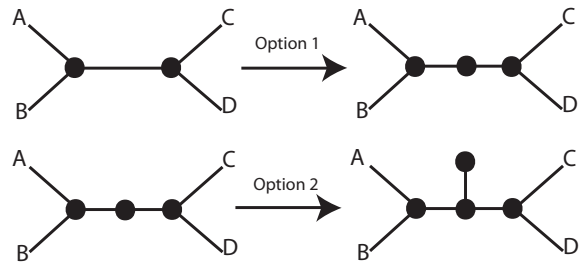}
\caption{Adding a boundary component to a pants decomposition graph has two possible forms.  In one case it adds a valence two vertex to the pants decomposition graph along an edge, while in the other case it adds a valence one vertex to the pants decomposition graph.}\label{fig:addpuncture}
\end{figure}

\subsection{Separating curves and pants decomposition graphs.} 
Given a pants decomposition $P \in \mathcal{P}(S),$ examining its pants decomposition graph $\Gamma(P)$ provides an easy way to determine if a pants decomposition $P$ contains a separating curve.  Specifically, a curve in a pants decomposition is a separating curve of the surface if and only if the effect of removing the corresponding edge in $\Gamma(P)$ non-trivially separates the graph into two connected components.  Recall that a non-trivial separation of a graph is a separation such that there are at least two connected components each consisting of at least two vertices or a single vertex and a loop.  

It is useful to differentiate two categories of separating curves,  
\begin{description}
\item[S1] separating curves that \emph{cut off genus},
\item[S2] and separating curves that \emph{cut off boundary components}.
\end{description}
By the former, we refer to separating curves on the surface whose removal separates that surface into two non-trivial subsurfaces each with genus at least one.  By the latter, we refer to  to separating curves on the surface whose removal separates that surface into two non-trivial subsurfaces at least one of which is a topological sphere with boundary components.  Equivalently, a separating curve $\gamma \in P \in \mathcal{P}(S)$ cuts off genus if the removal of the edge corresponding to $\gamma$ in $\Gamma(P)$ disconnects the graph into two cyclic components, otherwise if at least one of the connected components of $\Gamma(P) \setminus \gamma$ is acyclic, then the separating curve $\gamma$ cuts off boundary components.  Tracing through the definitions, it is immediate that separating curves that cut off genus can only exist on surfaces with genus at least two, while separating curves that cut off boundary components can only exist on surfaces with at least three boundary components.

\section{Proof of Theorem \ref{thm:main2}} \label{sec:thm}
In this section we prove the following technical theorem which in particular implies the statement of theorem \ref{thm:main2}: 
\begin{thm} \label{thm:main} (Main Theorem)
Let $S=S_{g,n}$ and set $D_{g,n} = \max_{_{P  \in \mathcal{P}(S) }} (d_{\mathcal{P}(S)} (P, \mathcal{P}_{sep}(S) )).$  Then, \begin{eqnarray*}
D_{g,n} &=& 0  \;\;\; \mbox{ for } g=0, n\geq 7  \\
& =& 1 \;\;\; \mbox{ for } g=0, n=6 \\
&=& 2 \;\;\; \mbox{ for } g=1, n\geq 3 \\ 
&\leq &    \lfloor 2\log_{2}(g-1) + 3 \rfloor \;\;\;   \mbox{ for } g \geq 2, n \leq 2 \\
&\leq & \min \left(  \lfloor 2\log_{2}(g-1) + 3 \rfloor,  \;\;  \lfloor \frac{16(g-1)}{ n} + 12 \rfloor   \right) \mbox{ for } g \geq 2, n \geq 3 \
\end{eqnarray*}
Furthermore, for any fixed number of boundary components (or punctures) $n$, $D_{g,n}$ grows asymptotically like the function $\log(g),$ that is $D_{g,n} = \Theta(\log(g)).$  On the other hand, for any fixed genus $g\geq 2, \;\forall n\geq 6g-5,$ $D_{g,n} = 2.$
\end{thm}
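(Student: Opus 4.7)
The plan is to split Theorem \ref{thm:main} into its several regimes and treat each with a different tool. The small genus cases ($g=0,1$) are essentially combinatorial exercises on the pants decomposition graph; the bulk of the work is the upper bound $\lfloor 2\log_2(g-1)+3\rfloor$ and the matching $\Omega(\log g)$ lower bound for $g\ge 2$; and finally the very-many-boundary-component regime ($n\ge 6g-5$) requires a separate pigeonhole argument. Throughout I will freely use Lemma \ref{lem:properties}, Corollary \ref{cor:fundamental}, the E1/E2 move calculus of Section \ref{subsec:calc}, and Lemma \ref{lem:cut-set}.

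For the small cases, I would first observe that by Corollary \ref{cor:fundamental}, $\Gamma(P)$ is a tree when $g=0$. A pants decomposition is in $\mathcal{P}_{sep}(S)$ iff $\Gamma(P)$ has an edge whose removal non-trivially disconnects it, which is exactly an internal edge of the tree. A case check on at-most-cubic trees with $V=n-2$ vertices shows every such tree has an internal edge once $V\ge 5$ (giving $D_{0,n}=0$ for $n\ge 7$), while for $V=4$ the only failure is $K_{1,3}$, and a single E2 move rearranges $K_{1,3}$ into a path, yielding $D_{0,6}=1$. The case $g=1$ is handled similarly by noting $\Gamma(P)$ has a unique cycle with trees hanging off; one checks directly that every configuration is within two E2 moves of a graph containing a separating edge, with an explicit configuration showing $D_{1,n}=2$ is sharp.

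For the upper bound when $g\ge 2$, the strategy is: \emph{find a short cycle in $\Gamma(P)$ and pinch it into a separating curve cutting off genus}. Since $\Gamma(P)$ is connected, at most cubic, with $V=2(g-1)+n$ vertices and first Betti number $g$, the Moore bound applied to the subgraph containing all cycles yields girth $h\le 2\log_2(g-1)+O(1)$, and a refinement accounting for the $n$ degree-$\le 2$ vertices produces the alternative bound involving $16(g-1)/n$. Given a shortest cycle $C$ of length $h$ in $\Gamma(P)$, the $h$ pants of pants indexed by its vertices fill a subsurface homeomorphic to $S_{1,h}$ whose boundary consists of the $h$ pants curves corresponding to edges leaving $C$. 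The key geometric step is to show that one can modify the pants decomposition of $S_{1,h}$ via $\le h+O(1)$ E2 moves so that it contains the ``waist curve'' $\gamma$ separating $S_{1,h}$ into $S_{1,1}\sqcup S_{0,h+1}$; using the explicit action of E2 on $\Gamma(P)$ described in Section \ref{subsec:calc}, I would do this inductively, at each stage using one E2 move to merge two adjacent boundary components of the $S_{0,*}$-piece, shrinking the cycle by one. The result is a pants decomposition containing $\gamma\in\mathcal{P}_{sep}(S)$ reached in $\le h + O(1)\le \lfloor 2\log_2(g-1)+3\rfloor$ moves.

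For the matching lower bound, Lemma \ref{lem:cut-set} reduces the problem to constructing pants decomposition graphs of high girth whose smallest non-trivial connected cut-set has logarithmic size. This is exactly the content of the family built in Section \ref{sec:construction}: combined with Lemma \ref{lem:properties} it yields, for each $g$, a pants decomposition $P$ of some $S_{g,n}$ with $\Gamma(P)$ in this family, and Lemma \ref{lem:cut-set} then gives $d_{\mathcal{P}(S)}(P,\mathcal{P}_{sep}(S))=\Omega(\log g)$. Together with the upper bound this establishes $D_{g,n}=\Theta(\log g)$ for fixed $n$. Finally, for the regime $n\ge 6g-5$, I would argue that any at most cubic graph with $V=2(g-1)+n\ge 8g-7$ vertices and only $g$ independent cycles must contain a large tree-like region with many degree-$\le 2$ vertices bordering the ``cyclic core''; a pigeonhole/counting argument on the boundary of this core produces a separating edge within two E2 moves. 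The main obstacle in this whole program is the quantitative ``pinching'' lemma in the upper bound: making the count of E2 moves needed to collapse a length-$h$ cycle into a waist curve come out to $h+O(1)$ with the precise constants, rather than a weaker $2h$ or $O(h\log h)$, will require a careful inductive argument exploiting that E2 acts on $\Gamma(P)$ by swapping pendant edges across an edge.
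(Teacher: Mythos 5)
Your proposal tracks the paper's decomposition closely, and the girth-pinching idea you describe for the $\lfloor 2\log_2(g-1)+3\rfloor$ bound is exactly the paper's Lemma~\ref{lem:girth}: one E2 move shrinks a cycle by one vertex, so a girth-$h$ cycle reduces to a loop in $h-1$ moves, and a loop forces a separating curve. Your stated ``main obstacle'' --- getting $h+O(1)$ rather than $2h$ moves --- is therefore not a real obstacle; the reduction is exactly one move per unit of cycle length and requires no delicate induction. The genus-zero and genus-one case checks also match the paper's.

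There are, however, two genuine gaps. First, you misattribute the $\lfloor 16(g-1)/n+12\rfloor$ bound to a ``refinement'' of the Moore-bound/girth argument. That bound has nothing to do with girth: a graph can have girth $3$ while having $n$ arbitrarily large. The paper's Corollary~\ref{cor:manypunctures2} obtains it by a completely different mechanism --- a pigeonhole count showing that some three degree-$\le 2$ vertices must be mutually within distance roughly $16(g-1)/n$, followed by E2 moves that shuffle those low-degree vertices until three become consecutive, at which point Lemma~\ref{lem:5move} produces a separating curve that cuts off \emph{boundary components} (not genus). Your write-up never describes this shuffle, and without it the $16(g-1)/n$ term cannot be derived.

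Second, your lower-bound argument is incomplete whenever $n\ge 3$. Lemma~\ref{lem:cut-set} only bounds the distance to pants decompositions containing a separating curve \emph{cutting off genus}. For surfaces with at least three boundary components there is a second species of separating curve, one cutting off boundary, and high girth plus large connected cut-sets say nothing about those. The construction in Section~\ref{sec:construction} has to be augmented, as the paper does, by ensuring that the $n$ added degree-$\le 2$ vertices are themselves pairwise at logarithmic distance, so that no short sequence of E2 moves can gather them to form a boundary-cutting separating curve. You should also note that the claimed equality $D_{g,n}=2$ for $n\ge 6g-5$ needs a matching lower bound (an explicit family of graphs that are not within one move of containing a separating edge), which the paper supplies via Figure~\ref{fig:distance2cut} but which your argument omits.
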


\begin{note}
For surfaces of \emph{low complexity}, i.e. $\xi(S) \leq 2,$ there are no non-trivially separating curves.  Hence, such surfaces are not included in Theorem \ref{thm:main}. 
\end{note}

The proof of the theorem is broken down into subcases which we prove as lemmas and corollaries.  The following is an outline of this proof.  First we prove the theorem for the special cases of genus zero and genus one surfaces.  Next, we consider the case of a fixed genus $g \geq 2$ surface, with a relatively large number of boundary components.  Finally, after developing some more general upper and lower bounds for the $g \geq 2$ cases, we prove the remainder of the theorem.  A portion of the proof depends on the existence of a family of special graphs constructed in Section \ref{sec:construction}.      

Recall that $D_{g,n} = \max_{_{P  \in \mathcal{P}(S_{g,n}) }} (d_{\mathcal{P}(S_{g,n})} (P, \mathcal{P}_{sep}(S_{g,n}) )).$  We begin by proving the genus zero case of Theorem \ref{thm:main}.  

\begin{lem} \label{lem:g0}
$D_{0,6}=1.$  More generally, for $n\geq 7,$ $D_{0,n}=0.$ 
\end{lem}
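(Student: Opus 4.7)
The plan is to translate the statement entirely into graph theory via the pants decomposition graph. For $P \in \mathcal{P}(S_{0,n})$, Lemma \ref{lem:properties} together with Corollary \ref{cor:fundamental} imply that $\Gamma(P)$ is a connected, at most cubic graph on $n-2$ vertices with trivial $\pi_1$, hence a tree. By the discussion at the end of Section \ref{sec:pantsgraph}, since $g=0$ there can be no separating curve of type \textbf{S1}; a curve of $P$ is separating of type \textbf{S2} precisely when the corresponding edge of $\Gamma(P)$, upon removal, disconnects $\Gamma(P)$ into two components each having at least two vertices (there are no loops in a tree). In short, $P \in \mathcal{P}_{sep}(S_{0,n})$ if and only if $\Gamma(P)$ contains an edge both of whose endpoints are internal (non-leaf) vertices.

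For the case $n \geq 7$, I will argue that every at most cubic tree on at least five vertices has such an interior edge, which immediately yields $D_{0,n}=0$. The key observation is that an internal vertex of an at most cubic tree has degree $2$ or $3$, so a single internal vertex can support at most three leaves, giving at most four vertices total. Consequently any tree with $\geq 5$ vertices has at least two internal vertices $u, v$. The unique $u$--$v$ path in the tree consists of edges between vertices of degree $\geq 2$, so every edge on this path is interior. Deleting any such edge non-trivially separates $\Gamma(P)$, producing the desired separating curve.

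For $n = 6$ the tree $\Gamma(P)$ has exactly four vertices, and up to isomorphism the only at most cubic trees on four vertices are the path $P_4$ and the star $K_{1,3}$. In the path, the middle edge is interior, so any such $P$ already lies in $\mathcal{P}_{sep}(S_{0,6})$; in the star, every edge is incident to a leaf, so the corresponding $P$ is not in $\mathcal{P}_{sep}(S_{0,6})$. This already yields $D_{0,6} \geq 1$. To obtain the matching upper bound $D_{0,6} \leq 1$, I will exhibit an explicit elementary pants move of type \textbf{E2} from a star pants decomposition to a path pants decomposition, using the calculus of Section \ref{subsec:calc}. Concretely, write the star with central vertex $c$ and leaves $\ell_1,\ell_2,\ell_3$, and select the edge $e=c\ell_1$. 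The E2 move that swaps the edge $c\ell_2$ (incident to $c$, distinct from $e$) with the empty set (there is no edge incident to $\ell_1$ other than $e$) detaches $\ell_2$ from $c$ and re-attaches it to $\ell_1$, yielding the path $\ell_3 - c - \ell_1 - \ell_2$. By Lemma \ref{lem:properties} this path is realized by a pants decomposition of $S_{0,6}$, and by the preceding paragraph that pants decomposition lies in $\mathcal{P}_{sep}(S_{0,6})$.

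The only mildly delicate point is verifying that the listed E2 move is admissible and produces a connected at most cubic graph (required by the calculus in Section \ref{subsec:calc}); here the resulting graph $P_4$ is visibly connected with all degrees at most $2$, so admissibility is automatic. Everything else reduces to small-tree case analysis plus the leaf-counting argument above, and no further input is needed.
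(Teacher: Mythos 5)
Your proof is correct and lands on essentially the same reduction as the paper: pass to the pants decomposition graph, observe it is an at most cubic tree on $n-2$ vertices, and characterize membership in $\mathcal{P}_{sep}(S_{0,n})$ by the existence of an edge whose two endpoints are both internal. The differences are at the level of detail. For $n\geq 7$ the paper enumerates the two at most cubic trees on five vertices, confirms each has an interior edge, and then invokes the ``adding boundary components'' observation of Subsection 3.2 to handle $n>7$; you instead give a clean, self-contained leaf-counting argument (at most four vertices with only one internal vertex in an at most cubic tree, hence two internal vertices once $n-2\geq 5$, and every edge of the unique path joining them is interior) that works uniformly for all $n\geq 7$ without appeal to the ``adding boundary components'' step. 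For $n=6$ the paper is content to say ``by inspection'' of Figure~\ref{fig:s06}, whereas you identify the two trees ($P_4$ and $K_{1,3}$) and exhibit the explicit \textbf{E2} move taking the star to the path and verify its admissibility, which makes the upper bound $D_{0,6}\leq 1$ airtight rather than pictorial. Both routes are valid; yours is a bit more self-contained and explicit, while the paper's leans on the adding-boundary-components machinery it has already set up.
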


\begin{proof}
For the surface $S_{0,6},$ by Lemma \ref{lem:properties} and Corollary \ref{cor:fundamental}, a pants decomposition graph is a connected at most cubic tree with four vertices and three edges.  Up to isomorphism there are only two options, as presented in the left side of Figure \ref{fig:s06}.  By inspection, the claim of the lemma holds for $S_{0,6}$.  
\begin{figure}[htpb]
\centering
\includegraphics[height=2.25 cm]{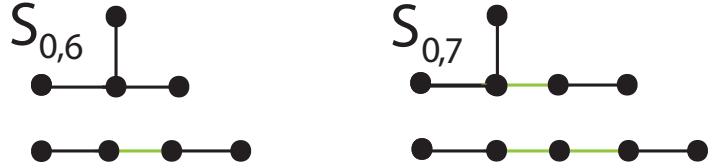}
\caption{Pants decomposition graphs of $S_{0,6}$ and $S_{0,7},$ respectively.  Green edges correspond to separating curves.}  \label{fig:s06} 
\end{figure}

Similarly, for the case of $S_{0,7}$ up to isomorphism, there are two pants decompositions graphs.  Both graphs contain separating curves, as shown in the right side of Figure \ref{fig:s06}.  More generally, as in subsection \ref{subsec:addingpunctures} for surfaces $S_{0,n}$ with $n> 7,$ any pants decomposition graph is achieved by appropriately adding boundary components to an appropriate pants decomposition graph of $S_{0,7}$.  Hence, the claim of the lemma holds from the immediate observation that the process of adding boundary components to a pants decomposition containing a separating curve yields a pants decomposition containing a separating curve.
\end{proof}

In the next lemma, we consider the genus one case of Theorem \ref{thm:main}.
\begin{lem} \label{lem:g1}
Assume $n \geq 3,$ then $D_{1,n}=2.$
\end{lem}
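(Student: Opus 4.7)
The plan is to prove $D_{1,n}=2$ via matching upper and lower bounds, both obtained by analyzing the pants decomposition graph $\Gamma(P)$ under the elementary pants moves described in Subsection~\ref{subsec:calc}. As structural setup, I note that by Lemma~\ref{lem:properties} and Corollary~\ref{cor:fundamental}, for $P\in\mathcal{P}(S_{1,n})$ the graph $\Gamma(P)$ is connected, at most cubic, has $n$ vertices and $n$ edges, and has free fundamental group of rank one; hence $\Gamma(P)$ consists of a unique cycle $C$ of some length $k\in\{1,\ldots,n\}$ (a loop when $k=1$, a double edge when $k=2$) together with trees attached at cycle vertices. Since $g=1$, any separating curve is of type~S2, and by Section~\ref{sec:pantsgraph} corresponds to a bridge edge of $\Gamma(P)$ whose removal leaves a non-trivial component on the non-cycle side (the cycle side being automatically non-trivial).

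For the lower bound $D_{1,n}\geq 2$, I would take $P_{0}$ to be any pants decomposition with $\Gamma(P_{0})$ a pure $n$-cycle, available by the second half of Lemma~\ref{lem:properties}. Since E1 is trivial on $\Gamma$, it suffices to enumerate the E2-neighbors of the $n$-cycle. The recipe of Subsection~\ref{subsec:calc} leaves exactly two possibilities up to isomorphism---another $n$-cycle, or a cycle of length $n-1$ with a single leaf pendant---and in neither case does any bridge separate off a non-trivial component. Thus no one-move neighbor of $P_{0}$ lies in $\mathcal{P}_{sep}(S)$, giving $d_{\mathcal{P}(S)}(P_{0},\mathcal{P}_{sep}(S))\geq 2$.

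For the upper bound $D_{1,n}\leq 2$, I would case on $\Gamma(P)$. If some tree attached to $C$ contains at least two vertices then $P\in\mathcal{P}_{sep}$ already. Otherwise $\Gamma(P)$ is a cycle $C_{k}$ with $n-k$ leaf pendants attached to distinct cycle vertices (at-most-cubicity forces distinctness). The central case is $k\geq 3$ with at least one pendant: picking a cycle vertex $v_{1}$ carrying a leaf $\ell$, a cycle neighbor $v_{2}$, and applying E2 to the edge $v_{1}v_{2}$, swapping the pendant at $v_{1}$ with the far cycle edge at $v_{2}$, produces a cycle of length $k-1$ with an attached length-two path-pendant rooted at $v_{1}$. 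The edge $v_{1}v_{2}$ then has $\{v_{2},\ell\}$ on its non-cycle side and is a non-trivial separating bridge, so $d(P,\mathcal{P}_{sep})\leq 1$. Pure cycles $k=n$ reach a configuration with at least one pendant after one preliminary E2 move, falling into an already-handled case, so $d(P,\mathcal{P}_{sep})\leq 2$. The small cases $k\leq 2$ are handled directly: $k=1$ forces $n\leq 2$ and is excluded, while $k=2$ forces $n\in\{3,4\}$, and a single E2 on one strand of the double edge (swapping a pendant with the parallel edge) produces a loop at a cycle vertex whose incident non-loop edge becomes a non-trivial separating bridge.

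I expect the main obstacle to lie in the $k\leq 2$ casework, because there the non-trivial side of the produced separating bridge is a single vertex carrying a loop rather than a multi-vertex subtree, so one must invoke the ``single vertex with a loop'' clause in the definition of non-trivial cut-set from Section~\ref{sec:prelim}; this clause is easy to overlook. The generic $k\geq 3$ argument is cleanly captured by the single pendant-swap surgery, and the lower bound amounts to a short enumeration of one-move neighbors of the $n$-cycle.
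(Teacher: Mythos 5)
Your proof is correct and follows the same strategy as the paper's: classify the unicyclic at-most-cubic graphs $\Gamma(P)$ into (i) already containing a non-trivial separating bridge, (ii) a cycle with leaf pendants, and (iii) a pure $n$-cycle, and show these lie at distance $0$, $1$, and $2$ respectively from $\mathcal{P}_{sep}$. The paper leaves the move-by-move verification as ``by inspection'' with a figure, whereas you carry it out explicitly (including the multigraph edge cases $k\leq 2$ and the loop clause in the non-trivial separation definition), so this is the same argument with the details filled in rather than a genuinely different route.
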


\begin{proof}
For the surface $S_{1,n},$ by  Lemma \ref{lem:properties} and Corollary \ref{cor:fundamental} any pants decomposition graph $\Gamma(P)$ is a connected at most cubic unicyclic graph with $n$ vertices and $n$ edges, where $n\geq 3.$  As such, there are three options for the isomorphism class of the pants decomposition graph $\Gamma$: 
\begin{enumerate}
\item $\Gamma$ contains a separating curve,
\item $\Gamma$ is an ($n-j$)-gon with $j \geq 1$ of the vertices of the ($n-j$)-gon having an additional edge connecting the vertex to a a valence one vertex,  or 
\item (3) $\Gamma$ is an n-gon.  
\end{enumerate}
See Figure \ref{fig:s1n} for the possibilities.  By inspection, the pants decomposition graphs of cases (1), (2), and (3) are distance zero, one, and two, respectively, from pants decompositions containing a separating curve.
\end{proof}

\begin{figure}[htpb]
\centering
\includegraphics[height=3 cm]{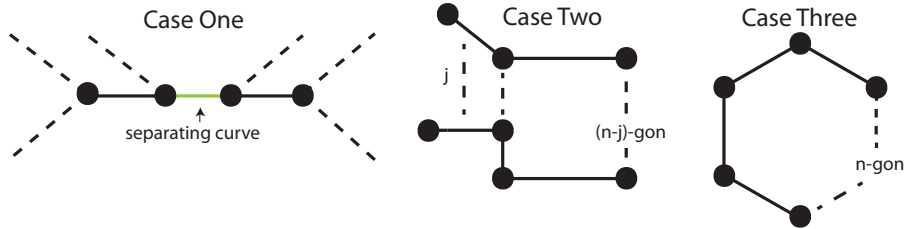}
\caption{Cases for pants decomposition graphs of $S_{1,n}.$ }\label{fig:s1n}
\end{figure}

In the next lemma, we describe a local situation in $\Gamma(P)$ which can be manipulated via elementary pants moves to generate a pants decomposition containing a separating curve.  

\begin{lem}\label{lem:5move} 
For $P \in \mathcal{P}(S)$ and $\Gamma(P)$ its pants decomposition graph.  If $\Gamma(P)$ has three consecutive vertices of degree at most two, then $d_{\mathcal{P}}(P,\mathcal{P}_{sep}) \leq 2.$
\end{lem}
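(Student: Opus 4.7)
My plan is to exhibit, in each possible local configuration around $v_1, v_2, v_3$, an explicit pair of elementary E2 moves starting from $P$ that reach a pants decomposition containing a non-trivially separating curve of $S$.

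First I would set $\alpha = v_1 v_2$ and $\beta = v_2 v_3$, and observe that $v_2$ has degree exactly $2$ (since it is incident to both $\alpha$ and $\beta$), so the pant $v_2$ has exactly one surface boundary of $S$. No vertex among $v_1, v_2, v_3$ can carry a loop, since a loop contributes $2$ to the degree and would push it beyond $2$. I would then case-split on the extra incidences at $v_1, v_3$: (A) each has an extra edge to a vertex outside $\{v_1, v_2, v_3\}$ (possibly to a shared outside vertex); (B) one of $v_1, v_3$ is a degree-$1$ leaf while the other has an extra outside edge; (C) both are leaves; or (D) the extra edge is $v_1 v_3$, forming a triangle. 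Case (C) forces $S = S_{0,5}$, for which $\mathcal{P}_{sep}(S)$ is empty and the lemma is vacuous; case (D) forces $S = S_{1,3}$.

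In cases (A) and (B), the plan is to apply two E2 moves. The first is an E2 move on $\alpha$ in the $S_{0,4}$-piece $v_1 \cup v_2$, choosing the pairing that groups $\beta$ together with one non-$\alpha$ boundary of $v_1$ (call it $c_0$: the internal edge to an outside vertex in (A), or a surface boundary of $v_1$ in (B)). After this move, the new pant $v_1'$ inherits $\beta$ as a boundary (so $\beta$ becomes an edge $v_1' v_3$), and the new pant $v_2'$ collects the two remaining non-$\alpha$ boundaries, both of which are surface boundaries of $S$, making $v_2'$ a degree-$1$ leaf in $\Gamma(P')$. The second move is an E2 move on $\beta$ in the $S_{0,4}$-piece $v_1' \cup v_3$, pairing together the two outward boundaries of this piece: the transferred $c_0$ at $v_1'$ with $v_3$'s outward boundary ($c_4$ in (A), or the surface boundary $b_3$ in (B)). The resulting new curve $\gamma$ sits in $\Gamma(P'')$ as a bridge whose removal isolates the two-vertex component $\{v_2', v_3'\}$ from the rest of the graph; this component realizes an $S_{0,4}$ subsurface (with $\xi = 1$), while the other side has $\xi = \xi(S) - 2 \geq 1$ since $S$ is not of low complexity. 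Thus $\gamma$ is a non-trivially separating curve and $P'' \in \mathcal{P}_{sep}(S)$.

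The main obstacle will be case (D), where the first move creates a double edge between $v_1'$ and $v_3$ in $\Gamma(P')$. The second move then operates on one of these two edges --- say the former $v_1 v_3$ edge $c$ --- inside the $S_{0,4}$-piece $v_1' \cup v_3$ glued along $c$. This piece has four boundary circles, including both sides $\beta^+, \beta^-$ of the curve $\beta$ (which appear as separate boundary circles since $\beta$ is not glued in this piece). Choosing the pairing $\{\beta^+, \beta^-\}$ --- one of the three valid E2 pairings in the Farey graph of this $S_{0,4}$-piece --- produces a new pant whose two $\beta$-sides are identified upon re-gluing $\beta$ to form $S$, yielding an $S_{1,1}$ component corresponding to a single vertex with a loop in $\Gamma(P'')$. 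The new curve is then a bridge separating this loop-vertex (non-trivial due to the loop) from an $S_{0,4}$ on the other side, again a non-trivially separating curve. Verifying that this loop-creating pairing is a valid E2 move, and that the loop vertex indeed supplies the non-trivial side under the definition of non-trivial separation, is the main non-routine step. In every case, I would conclude $d_{\mathcal{P}(S)}(P, \mathcal{P}_{sep}(S)) \leq 2$.
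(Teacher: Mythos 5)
Your approach matches the paper's: the paper's proof consists of pointing to Figure~\ref{fig:valence2}, which depicts two explicit elementary moves that produce a separating curve cutting off boundary components, and you carry out exactly this plan with a full case analysis. Cases (A), (C), and (D) are handled correctly; in particular your loop-creating second move in case (D) is a valid E2 pairing and does yield a vertex-with-loop whose isolation is a non-trivial separation, which is the right extra care to take there.

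There is, however, a genuine error in case (B). Fix the sub-case you actually describe, where $v_1$ is the leaf (so $c_0$ is one of $v_1$'s two surface boundaries) and $v_3$ carries the internal edge $c_4$ together with the surface boundary $b_3$. Your second move pairs $\{c_0,b_3\}$ against $\{\alpha',c_4\}$. The resulting pants are $v_1''=\{c_0,b_3,\gamma\}$, which is a \emph{leaf} in $\Gamma(P'')$ (two surface boundaries, one edge), and $v_3'=\{\alpha',c_4,\gamma\}$, which retains the internal edge $c_4$ and hence has degree three. Removing $\gamma$ therefore isolates the single vertex $v_1''$, a \emph{trivial} separation; it does not isolate $\{v_2',v_3'\}$ as you assert, because $v_3'$ remains attached to the rest of the graph through $c_4$. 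In fact your two moves take $P$, which in case (B) already lies in $\mathcal{P}_{sep}$ (the edge $\beta$ is a non-trivial bridge, since cutting it leaves $\{v_1,v_2\}$ as a two-vertex component on one side and a component of size at least two on the other), to a $P''$ with no separating curve at all. The conclusion $d_{\mathcal{P}}(P,\mathcal{P}_{sep})\leq 2$ is still true in case (B) --- trivially, since the distance is zero --- but the argument you give does not establish it. To make the construction itself work you should pair $\{c_0,c_4\}$ against $\{\alpha',b_3\}$, exactly as in case (A): the internal boundary $c_4$ must be shed from $v_3'$ so that $v_2'\cup v_3'$ is the subsurface with three surface boundaries cut off by $\gamma$. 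The simpler fix is just to note that case (B) is already separated before any moves are made.
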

\begin{proof}
It suffices to explicitly exhibit a process of two elementary pants moves for locally constructing a separating curve that cuts off boundary components assuming that $\Gamma(P)$ has three consecutive vertices of degree at most two.  See Figure \ref{fig:valence2} for these moves.
\end{proof}

\begin{figure}[htpb]
\centering
\includegraphics[height=2.4 cm]{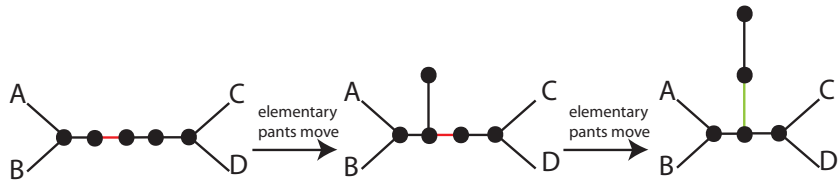}
\caption{Two elementary pants moves creating a separating curve that cuts off boundary components in $\Gamma$ beginning from a pants decomposition graph with three consecutive valence at most two vertices. }\label{fig:valence2}
\end{figure}

Using Lemma \ref{lem:5move} we have the following corollary, providing a sharp upper bound on $D_{g,n}$ for fixed $g\geq 2.$
\begin{cor} \label{cor:manypunctures1}
For all  $g\geq 2,$ $n\geq 6g-5$ $\implies D_{g,n}= 2.$
\end{cor}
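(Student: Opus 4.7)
The plan is to reduce the upper bound to Lemma~\ref{lem:5move} via a degree-counting argument, supplemented by a pendant-based construction in the residual case; the matching lower bound $\geq 2$ follows by exhibiting an explicit witness.

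Using Lemma~\ref{lem:properties}, $\Gamma(P)$ is connected, at most cubic, with $V = 2(g-1)+n$ vertices and $E = 3(g-1)+n$ edges. Let $n_i$ denote the number of vertices of degree $i$; the identities $V = n_1+n_2+n_3$ and $2E = n_1+2n_2+3n_3$ give $n_3 = n_1 + 2(g-1)$ and $n_2 = n - 2n_1$. Suppose $\Gamma(P)$ has no three consecutive degree-$\leq 2$ vertices. Writing $A := \{v : \deg v \leq 2\}$, the induced subgraph $\Gamma[A]$ then has maximum degree at most $1$ (loops at $A$-vertices are impossible by connectivity of $\Gamma$ when $|V| \geq 2$), so $e_{AA} \leq |A|/2 = (n - n_1)/2$. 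Using $e_{AB} = (n_1 + 2n_2) - 2e_{AA}$ together with $e_{AB} \leq 3n_3$ yields, after substitution, $n \leq 5n_1 + 6(g-1)$. For $n \geq 6g - 5$ this forces $n_1 \geq 1$, i.e., $\Gamma(P)$ contains a pendant.

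Let $v$ be a pendant with unique neighbor $w$ of degree $3$. If $w$ has a second neighbor $u \in A$, I would perform the E2 move on the edge $u-w$ that swaps the pendant edge $v-w$ at $w$ with $u$'s remaining edge (or with $u$'s empty slot if $u$ has degree $1$). After this single move, $v$ becomes a pendant attached to $u$, and removing the pivot edge $u - w$ in the new graph disconnects the non-trivial two-vertex component $\{u,v\}$, corresponding to a separating curve; hence $d_{\mathcal{P}}(P, \mathcal{P}_{sep}) \leq 1$. In the remaining sub-case where both non-pendant neighbors of $w$ lie in $B$, a preliminary E2 move along a $B$-$B$ edge incident to $w$ shifts $v$ to an adjacent $B$-vertex, and one checks that under $n \geq 6g - 5$ a single such shift always exposes an $A$-neighbor, reducing to the previous case and yielding $d \leq 2$. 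The direct case where three consecutive degree-$\leq 2$ vertices already exist is of course handled by Lemma~\ref{lem:5move} with no pendant manipulation.

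The matching lower bound $D_{g,n} \geq 2$ is established by exhibiting a pants decomposition with $n_1 = 0$---concretely, a ``theta-like'' graph consisting of three $A$-paths of suitable lengths joining two degree-$3$ vertices, extended to higher genus by analogous constructions---and verifying via the calculus of E2 moves in Subsection~\ref{subsec:calc} that no single elementary pants move produces a separating curve. The main technical obstacle will be the residual sub-case of the upper bound: rigorously proving that a single preliminary pendant relocation always suffices when $w$'s non-pendant neighborhood is entirely in $B$. A cleaner resolution would be to strengthen Lemma~\ref{lem:5move} to also cover the ``pendant adjacent to a degree-$3$ vertex with a second degree-$\leq 2$ neighbor'' configuration, folding the pendant-cherry construction directly into the lemma and subsuming the sub-case analysis.
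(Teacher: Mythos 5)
Your degree count is sharper than the paper's own argument, which asserts without elaboration that $n \geq 6g-5$ forces three consecutive degree-$\leq 2$ vertices ``by pigeon hole considerations'' and then invokes Lemma~\ref{lem:5move}. Your inequality $n \leq 5n_1 + 6(g-1)$ (valid under the hypothesis of no three consecutive degree-$\leq 2$ vertices) shows that this bare pigeonhole is not valid once pendants are present, so you have put your finger on a genuine imprecision in the proof as written: for $n_1>0$ one can indeed build connected, at-most-cubic graphs with the right vertex and edge counts in which every component of the induced subgraph on degree-$\leq 2$ vertices has size at most two. Your first sub-case---a pendant $v$ whose degree-$3$ neighbor $w$ has a second neighbor $u$ of degree $\leq 2$; perform the E2 move on $(u,w)$ transporting the pendant edge to $u$, so that deleting the new edge $(u,w)$ severs the two-vertex component $\{u,v\}$---is correct and gives distance $\leq 1$ in one move. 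The lower-bound sketch via an $n_1=0$ witness that is not one move from a separating edge matches the paper's appeal to an explicit family (Figure~\ref{fig:distance2cut}).

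The genuine gap is exactly where you flag it: the claim that a single preliminary pendant relocation ``always exposes an $A$-neighbor'' when both non-pendant neighbors $u_1,u_2$ of $w$ lie in $B$ is an assertion, not an argument. After shifting $v$ to $u_1$, you need $u_1$ already to have a degree-$\leq 2$ neighbor, otherwise two moves do not reach $\mathcal{P}_{sep}$. One way to close the gap is a second counting pass. Assume for contradiction there are no three consecutive degree-$\leq 2$ vertices and that every pendant's degree-$3$ neighbor $w$ has $u_1,u_2\in B$ with no degree-$\leq 2$ neighbors at all (the case of two pendants sharing a neighbor $w$ is easy: one E2 move on a pendant edge produces a separating edge). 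Let $W$ be the set of pendant-neighbors and $U=\bigcup_i\{u_1^{(i)},u_2^{(i)}\}$; then $|W|=n_1$, $W\cap U=\emptyset$, and $e(W,U)=2n_1$ together with $\deg\leq 3$ gives $|U|\geq 2n_1/3$. Since $W$-vertices send only their pendant edge to $A$ and $U$-vertices send none, every edge from $A':=A\setminus\{\text{pendants}\}$ to $B$ lands in $B\setminus(W\cup U)$, so $e_{A'B}\leq 3\bigl(n_3-|W\cup U|\bigr)\leq 6(g-1)-2n_1$; on the other hand $e_{A'B}\geq 2n_2-2e_{A'A'}\geq n_2=n-2n_1$, whence $n\leq 6(g-1)$, contradicting $n\geq 6g-5$. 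With that supplement your case analysis is complete, and it makes the paper's terse pigeonhole step rigorous; your suggestion to fold the pendant configuration into a strengthened Lemma~\ref{lem:5move} is a reasonable way to package this.
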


\begin{proof}
We prove the lemma in two steps.
\begin{enumerate}
\item Step One:  $D_{g,n} \leq 2.$
$\\$ $\indent$
By Lemma \ref{lem:properties} for $P \in \mathcal{P}(S_{g,n})$, $\Gamma(P)$ is a connected at most cubic graph with $2(g-1)+n$ vertices and $3(g-1)+n$ edges.  Since $n\geq 6g-5,$ by pigeon hole considerations it follows that $\Gamma(P)$ has three consecutive vertices of degree at most two.  The proof of step one follows by Lemma \ref{lem:5move}.

\item Step Two: $D_{g,n} \geq 2.$
$\\$ $\indent$ By Lemma \ref{lem:properties} it suffices to explicitly exhibit connected at most cubic graphs with $2(g-1)+n$ vertices and $3(g-1)+n$ edges for all $g \geq 2, n \geq 6g-5$ such that the graphs neither contain non-trivial cut edges nor are one elementary move away from a graph with a non-trivial cut edge.  See Figure \ref{fig:distance2cut} for an explicit construction of such a family of graphs.
\end{enumerate}
\end{proof}

\begin{figure}[htpb]
\centering
\includegraphics[height=4 cm]{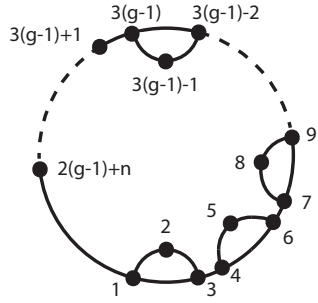}
\caption{A family of connected at most 3-regular graphs with $2(g-1)+n$ vertices and $3(g-1)+n$ edges, for all $g \geq 2, n \geq 6g-5.$ Such pants decompositions with corresponding graphs are distance (at least) two from a pair of pants containing a separating curve.}\label{fig:distance2cut}
\end{figure}

More generally, we have the following corollary providing an upper bound for $D_{g,n}$ based on local moves that create separating curves cutting off boundary components:

\begin{cor} \label{cor:manypunctures2}
Let $S=S_{g,n}$ with $g\geq 2,$  $n\geq 3$ $\implies  D_{g,n} \leq \lfloor \frac{16(g-1)}{n} + 12 \rfloor.$
\end{cor}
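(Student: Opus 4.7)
The plan is to combine Lemma \ref{lem:5move} with a counting argument: produce a pants decomposition whose graph displays three consecutive vertices of degree at most two in at most $\lfloor 16(g-1)/n + 10\rfloor$ elementary pants moves, then invoke Lemma \ref{lem:5move} for the final two moves to create a separating curve cutting off boundary components. This immediately yields the stated upper bound.

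First I would record the degree data from Lemma \ref{lem:properties}: $\Gamma(P)$ is connected and at most cubic, with $V=2(g-1)+n$ vertices and $E=3(g-1)+n$ edges, so $\sum_v \deg(v)=2E=6(g-1)+2n$. Since no vertex has degree greater than three, an elementary count shows that at least $n$ vertices have degree $\leq 2$ and at most $2(g-1)$ vertices have degree exactly three. Thus low-degree vertices are "abundant" in the precise sense that they outnumber high-degree ones whenever $n\gtrsim g$.

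Next I would use pigeonhole along a long walk. Fix any traversal $w$ of $\Gamma(P)$ that visits every vertex at least once and each vertex at most twice (for example, a doubled spanning tree, of length $\leq 2V-2$). Along $w$, the number of occurrences of degree-$\leq 2$ vertices is at least $n$, while the number of occurrences of degree-three vertices is at most $4(g-1)$. Partitioning $w$ into runs delimited by consecutive appearances of degree-$\leq 2$ vertices and averaging, three consecutive degree-$\leq 2$ occurrences on $w$ must be separated by at most $8(g-1)/n$ degree-three occurrences in total.

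I would then apply the calculus of Subsection \ref{subsec:calc} to consolidate these three low-degree vertices. An E2 move at an edge joining a degree-three vertex $v_1$ to a degree-$\leq 2$ vertex $v_2$ can (via the "empty-slot" interchange described there) transfer a pendant edge from $v_1$ onto $v_2$, decreasing by one the number of degree-three vertices interposed between two targeted degree-$\leq 2$ vertices along $w$. With at most two such moves per decrement, the $\leq 8(g-1)/n$ intervening degree-three vertices can be eliminated in at most $16(g-1)/n$ elementary moves, producing the required three consecutive degree-$\leq 2$ vertices. Combining with the two closing moves supplied by Lemma \ref{lem:5move} gives the bound $\lfloor 16(g-1)/n+12\rfloor$.

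The main obstacle is the last step: one must verify carefully that each "push" really can be executed in at most two E2 moves without creating new degree-three obstructions inside the active stretch of $w$, and without violating the connected-and-at-most-cubic side condition attached to the E2 move. Pinning down the constant $2$ in the per-push cost (and thereby the constant $16$ in the statement) requires a case analysis around the two boundary configurations of the active interval, and one must also confirm that the pigeonhole bound $8(g-1)/n$ is not inflated by the double counting inherent to a doubled spanning tree; once these details are in place, the stated floor follows by absorbing integer rounding and the two terminal moves into the additive constant $12$.
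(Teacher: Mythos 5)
Your high-level plan is the same as the paper's: reduce via Lemma~\ref{lem:5move} to producing three consecutive degree-$\le 2$ vertices, show that three low-degree vertices must exist close together, and use the E2 calculus of Subsection~\ref{subsec:calc} to shuffle the degree-defects together before spending the final two moves. Where you diverge is in the combinatorial step. The paper bounds $d_{\Gamma}(v_j,v_k)+d_{\Gamma}(v_j,v_l)$ for some triple of low-degree vertices directly, by a packing argument: disjoint initial geodesic segments of length $\lfloor\lceil m/2\rceil/2\rfloor$ emanating from each low-degree vertex must fit inside the $2(g-1)+n$ vertices of $\Gamma$. You instead propose to pigeonhole along a doubled-spanning-tree traversal. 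That is a legitimate alternative route, but as written it has real gaps.

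First, both of your degree counts are reversed. From $2E-\sum\deg = 0$ and $3|V|-2|E|=n$ one gets the identity $2n_1+n_2=n$, where $n_i$ is the number of degree-$i$ vertices. This gives $n_3 = 2(g-1)+n_1 \ge 2(g-1)$, not $\le 2(g-1)$, and the number of degree-$\le 2$ vertices is $n-n_1$, which is \emph{at most} $n$ and at least $\lceil n/2\rceil$, not ``at least $n$.'' (The quantity equal to exactly $n$ is $|V_2'|$, the count with degree-one vertices doubled, which is what the paper actually uses.) Feeding the correct inequalities into your pigeonhole argument changes the arithmetic; with $|V_2|$ possibly as small as $n/2$ and walk length up to $2|V|-1$, you need to redo the averaging carefully to see whether the right constant emerges. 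Second, your ``at most two E2 moves per push'' is unjustified and, if true, would appear to double the leading term rather than affect only the additive constant: the E2 move as described in Subsection~\ref{subsec:calc} already transfers one unit of degree-defect across an edge in a \emph{single} move by swapping an edge at $v_1$ with the empty slot at $v_2$, which is what makes the paper's count $d_{\Gamma}(v_j,v_k)+d_{\Gamma}(v_j,v_l)+2$ come out to $\lfloor 16(g-1)/n+12\rfloor$. Finally, you acknowledge that your constants land at roughly $16(g-1)/n+2$ and hope to ``absorb'' the remaining $10$; that gap needs to be closed explicitly, since the stated corollary is a precise floor, not an asymptotic. The strategy is sound and close to the paper's, but the degree bookkeeping and the per-move cost both need to be corrected before the stated constant is actually reached.
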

\begin{rem} It is likely that the constants in Corollary \ref{cor:manypunctures2} are not sharp.  Nonetheless they are needed for our current proof.
\end{rem}

\begin{proof}
Let $P$ be a pants decomposition of $S_{g,n}$, and let $\Gamma=\Gamma(P)$ be its pants decomposition graph.  By Lemma \ref{lem:properties}, $\Gamma$ is a connected at most cubic graph with $2(g-1)+n$ vertices and $3(g-1)+n$ edges.  Setting $V_{2}=\{v_{i} \in V(\Gamma) | d(v_{i}) \leq 2 \}$, and letting $V'_{2}$ be the set of vertices in $V_{2}$ with degree one vertices $v_{i}$ double counted.  Note that $|V_{2}| \leq \lceil \frac{n}{2} \rceil$, while $|V'_{2}|=n.$ 

Recall that if a vertex $v_{i}$ of degree at most two is adjacent to a vertex $x$ of degree three (two), then an elementary pants move can be applied to $\Gamma$ which has the effect of making the vertex $x$ have degree two (one) at the cost of increasing the degree of $v_{i}$ by one, as in subsection \ref{subsec:calc}.  In other words, elementary moves can be used to shuffle boundary components of the surface between adjacent pairs of pants in a pants decomposition.  Hence, to prove the corollary, by Lemma \ref{lem:5move} it suffices to show that for some three vertices $v_{j},v_{k},v_{l} \in V'_{2},$ the following inequality holds:
\begin{eqnarray}\label{eq:upperbound}
d_{\Gamma}(v_{j},v_{k}) + d_{\Gamma}(v_{j},v_{l}) \leq \lfloor \frac{16(g-1)}{n} + 10 \rfloor
\end{eqnarray}

Assume that the degree at most two vertices $v_{i}$ are scattered amongst the graph $\Gamma$ such that for any three vertices $v_{j},v_{k},v_{l} \in V'_{2},$ we have $d_{\Gamma}(v_{j},v_{k}) + d_{\Gamma}(v_{j},v_{l}) \geq m$.  Based on the size of the graph $\Gamma,$ we will obtain an upper bound on $m$ of  $\lfloor \frac{16(g-1)}{n} + 10 \rfloor$.  Thereby proving equation \ref{eq:upperbound} and completing the proof of the corollary.  

For any fixed vertex $v_{j} \in V'_{2}$ consider the two closest (not necessarily unique) vertices $v_{k},v_{l} \in V'_{2}$.  Let $d_{\Gamma}(v_{j},v_{k})=m_{k}$ and $d_{\Gamma}(v_{j},v_{l})=m_{l}$.  By assumption $m_{k}+m_{l} \geq m.$  Without loss of generality, assume that $m_{l}\geq m_{k}$ and hence $m_{l} \geq \lceil \frac{m}{2} \rceil.$  By construction, the first $\lfloor \left( \frac{\lceil \frac{m}{2} \rceil}{2} \right) \rfloor$ vertices traversed in a geodesic in $\Gamma$ from $v_{j}$ to $v_{l}$, including the initial vertex $v_{j}$, is disjoint from the first $\lfloor \left( \frac{\lceil \frac{m}{2} \rceil}{2} \right) \rfloor$ vertices of any similarly constructed geodesic with a different initial vertex $v_{i} \ne v_{j},v_{k}$ (see Figure \ref{fig:disjointpaths} for an illustration).  Furthermore, for the special cases of $v_{j}=v_{k}$ or $v_{k}=v_{l}$, namely where either $v_{j}$ or $v_{k}$ has degree one, the first $\lceil \frac{m}{2} \rceil$ vertices traversed in a geodesic in $\Gamma$ from $v_{j}$ to $v_{l}$, including the initial vertex $v_{j}$, is disjoint from all similarly constructed paths with different initial vertex, as well as from all previously constructed geodesic path subsegments. Putting things together and comparing with the total number of vertices in $\Gamma$, it follows that:  
$$ \lceil \frac{n}{2} \rceil \cdot \lfloor \left( \frac{\lceil \frac{m}{2} \rceil}{2} \right) \rfloor \leq 2(g-1)+n  \implies m \leq \frac{16(g-1)}{n}+10$$
\end{proof}

\begin{figure}[htpb]
\centering
\includegraphics[height=5 cm]{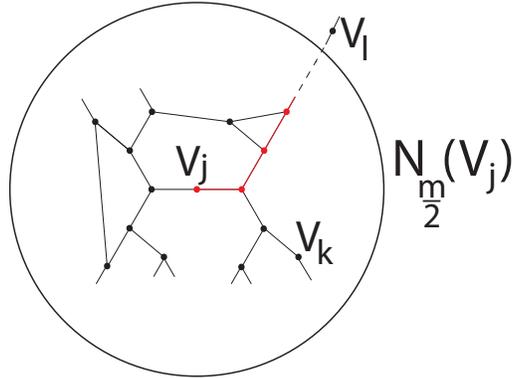}
\caption{The first $\lfloor \left( \frac{\lceil \frac{m}{2} \rceil}{2} \right) \rfloor$ vertices traversed in a geodesic in $\Gamma$ from $v_{j}$ to $v_{l}$, denoted in red, is disjoint from the first $\lfloor \left( \frac{\lceil \frac{m}{2} \rceil}{2} \right) \rfloor$ vertices of any similarly constructed geodesic with a different initial vertex $v_{i} \ne v_{j},v_{k}$.}\label{fig:disjointpaths}
\end{figure}

The following lemma shows that girth also provides an upper bound on the distance of a pants decomposition to a pants decomposition containing a separating curve.

\begin{lem} \label{lem:girth}
For $P \in \mathcal{P}(S)$ and $\Gamma(P)$ its pants decomposition graph  $$d_{\mathcal{P}}(P,\mathcal{P}_{sep}) \leq \mbox{girth}(\Gamma(P)) -1.$$
\end{lem}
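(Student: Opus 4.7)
My plan is to proceed by strong induction on $g := \mbox{girth}(\Gamma(P))$, showing that a single well-chosen E2 move always reduces the girth by at least one. In the base case $g = 1$ there is a loop at some vertex $v \in \Gamma(P)$: two of the three boundary curves of the pair of pants $P_v$ are identified in $S$, embedding $P_v$ as a one-holed torus. The third boundary of $P_v$ is then a curve of $P$ cutting off this $S_{1,1}$, and under the standing hypothesis $\mathcal{P}_{sep}(S) \ne \emptyset$ (equivalently $\xi(S) \geq 3$) the complement has complexity at least one, so this curve is non-trivially separating. Thus $P \in \mathcal{P}_{sep}(S)$ and $d_{\mathcal{P}}(P, \mathcal{P}_{sep}) = 0 = g - 1$.

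For the inductive step with $g \geq 2$, it suffices to produce an E2 move yielding $P'$ with $\mbox{girth}(\Gamma(P')) \leq g - 1$, since the triangle inequality combined with the inductive hypothesis then yields $d_{\mathcal{P}}(P, \mathcal{P}_{sep}) \leq 1 + (g - 2) = g - 1$. Fix a shortest cycle $C \colon v_1 v_2 \cdots v_g v_1$ in $\Gamma(P)$, with successive edges $\gamma_i = (v_i, v_{i+1})$ (indices mod $g$). For $g \geq 3$, minimality of $C$ forces $v_1$ and $v_2$ to be distinct and to share only the edge $\gamma_1$, so the regular neighborhood $P_{v_1} \cup_{\gamma_1} P_{v_2}$ is a genuine $S_{0,4}$ with four distinct boundary curves $\gamma_g, \gamma_2, \partial_1, \partial_2$, where $\partial_i$ denotes the third boundary of $P_{v_i}$. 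I then apply the E2 move on $\gamma_1$ replacing it with the curve $\gamma_1'$ inducing the boundary partition $\{\gamma_g, \gamma_2\} \sqcup \{\partial_1, \partial_2\}$. In the new graph $\Gamma(P')$ the ``inner'' pants $u$ bounded by $\{\gamma_g, \gamma_2, \gamma_1'\}$ lies on a cycle $u\, v_3\, v_4 \cdots v_g\, u$ of length $g - 1$, witnessing $\mbox{girth}(\Gamma(P')) \leq g - 1$ as needed. For the boundary case $g = 2$, the analogous E2 move produces a curve $\gamma_1'$ separating the two sides of $\gamma_2$ from $\{\partial_1, \partial_2\}$ inside the corresponding $S_{0,4}$; one side of $\gamma_1'$ is then a one-holed torus (decomposed by $\gamma_2$), so $\gamma_1'$ is non-trivially separating and $P' \in \mathcal{P}_{sep}(S)$ directly.

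The critical technical point, which I expect to be the main obstacle, is verifying that the four boundaries $\gamma_g, \gamma_2, \partial_1, \partial_2$ of the $S_{0,4}$ are genuinely distinct: any coincidence among them would exhibit either a loop at some $v_i$ or a chord of $C$ shorter than $g$, both contradicting the minimality of $C$. One must also invoke the calculus of Section \ref{subsec:calc} to confirm that the chosen E2 move preserves connectedness and the at-most-cubic property of the graph, ensuring that $P'$ is itself a valid pants decomposition. Beyond these verifications the argument is essentially bookkeeping, tracking how the cycle $C$ shortens by passing through the new inner pants $u$ in place of $v_1$ and $v_2$.
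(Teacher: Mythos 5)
Your proof is correct and follows essentially the same strategy as the paper's: the base case (a loop in $\Gamma(P)$ forces a separating curve already present in $P$) together with the observation that a single E2 move can shorten a shortest cycle by one, iterated down to a loop. The paper states this in two sentences and defers to a figure; you have unpacked exactly the content of that figure, supplying the verification that the four boundary curves of the relevant $S_{0,4}$ are distinct when $g\geq 3$ (via minimality of the chosen cycle) and handling $g=2$ by noting the E2 move produces a separating curve directly. The one place you flag as a potential gap --- that the E2 move preserves connectedness and the at-most-cubic property --- is actually automatic, since an elementary pants move is defined on the surface, so the result is always a genuine pants decomposition whose graph is therefore connected and at most cubic by Lemma \ref{lem:properties}; no separate check is needed.
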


\begin{proof}
Due to valence considerations a cycle of length one, or a loop, in the pants decomposition graph implies the corresponding pants decomposition contains a separating curve .  Hence, it suffices to show that given any cycle of length $n\geq 2,$ there exists an elementary pants move decreasing the length of a cycle by one.  Such an elementary pants move is represented in Figure \ref{fig:loop}. 
\end{proof}

\begin{figure}[htpb]
\centering
\includegraphics[height=3.5 cm]{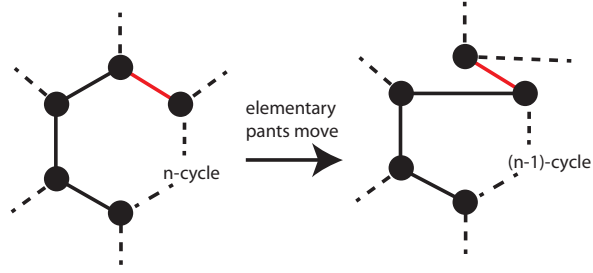}
\caption{Elementary pants move decreases the length of a cycle in $\Gamma.$}\label{fig:loop}
\end{figure}

As a corollary of Lemma \ref{lem:girth}, in conjunction with the discussion in Example \ref{ex:cage} which ensures that the girth of a cubic graph grows at most logarithmically in the vertex size of the graph, we have a logarithmic upper bound on $D_{g,0}$ for closed surfaces.  Specifically, we have the following corollary.  

\begin{cor} \label{cor:leqlogg} $\forall g \geq 2,$ 
$\; D_{g,n} \leq \lfloor2\log_{2}(g-1)+3\rfloor.$
\end{cor}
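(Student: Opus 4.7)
The plan is to combine Lemma \ref{lem:girth}, which bounds $d_{\mathcal{P}}(P, \mathcal{P}_{sep})$ by $\mathrm{girth}(\Gamma(P)) - 1$, with the Moore-type inequality $\mu(\gamma) \geq 2^{\gamma/2}$ recorded in Example \ref{ex:cage}, which forces the girth of a cubic graph to be logarithmic in its vertex count. Since the narrative preceding the corollary explicitly targets the closed-surface case $D_{g,0}$, the argument is cleanest there, and I would carry it out first.

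For any $P \in \mathcal{P}(S_{g,0})$, Lemma \ref{lem:properties} gives that $\Gamma(P)$ is an at-most-cubic graph with $V = 2(g-1)$ vertices and $E = 3(g-1)$ edges. Because $n=0$, we have $2E = 6(g-1) = 3V$, and the handshake identity forces every vertex of $\Gamma(P)$ to have degree exactly three; in particular $\Gamma(P)$ is cubic, not merely at most cubic. The Moore bound from Example \ref{ex:cage} then applies directly to yield $2(g-1) \geq 2^{\mathrm{girth}(\Gamma(P))/2}$, equivalently $\mathrm{girth}(\Gamma(P)) \leq 2\log_{2}(2(g-1)) = 2\log_{2}(g-1) + 2$. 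Feeding this into Lemma \ref{lem:girth} gives
$$d_{\mathcal{P}(S)}(P,\mathcal{P}_{sep}) \;\leq\; \mathrm{girth}(\Gamma(P)) - 1 \;\leq\; 2\log_{2}(g-1) + 1 \;\leq\; \lfloor 2\log_{2}(g-1) + 3 \rfloor,$$
using that the left-hand side is an integer; taking the maximum over $P$ yields the corollary in the closed case.

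The main obstacle is extending the argument to $n \geq 1$. Once boundary components are present, $\Gamma(P)$ has $2(g-1)+n$ vertices and is only at most cubic; long chains of degree-two vertices can inflate $\mathrm{girth}(\Gamma(P))$ far beyond $2\log_{2}(g-1) + O(1)$, so the naive girth bound is no longer enough. The structural fact that rescues the estimate is that the \emph{cubic core} of $\Gamma(P)$ obtained by pruning pendant trees and contracting maximal paths of degree-two vertices has exactly $2(g-1)$ vertices independently of $n$, since the first Betti number is always $g$ by Corollary \ref{cor:fundamental}. My plan in this regime is to use the calculus of elementary moves from subsection \ref{subsec:calc} to absorb the degree-at-most-two vertices into neighbouring cubic ones (in the spirit of Lemma \ref{lem:5move}), thereby reducing to the cubic-core situation and making the Moore estimate above applicable; in the complementary regime where $n$ is large relative to $g$, the bound of Corollary \ref{cor:manypunctures2} already beats $\lfloor 2\log_{2}(g-1) + 3 \rfloor$, so the two results fit together to give the full corollary.
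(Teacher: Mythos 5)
Your closed-surface argument ($n=0$) is correct and matches the paper's first step: handshaking makes $\Gamma(P)$ exactly cubic, the Moore-type bound of Example \ref{ex:cage} gives $\mathrm{girth}(\Gamma(P)) \leq \lfloor 2\log_2(g-1)+2 \rfloor$, and Lemma \ref{lem:girth} then yields $D_{g,0} \leq \lfloor 2\log_2(g-1)+1 \rfloor$. The gap is in the $n \geq 1$ extension. Elementary pants moves never change the number of vertices of $\Gamma(P)$, which is fixed at $2(g-1)+n$, so one cannot ``absorb the degree-at-most-two vertices \dots thereby reducing to the cubic-core situation'': low-valence vertices can only be shuffled around the graph, not eliminated, so the cubic core is never reached. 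Even if you could clear them off a chosen short cycle, you would still need a quantitative bound on the number of moves required to do so, and that count is exactly what is missing from your sketch. Your fallback to Corollary \ref{cor:manypunctures2} also does not fill the hole: its bound $\lfloor 16(g-1)/n + 12 \rfloor$ is always at least $12$, so for small genus (e.g.\ $g=2$, where the target value is $3$) no choice of $n$ makes it competitive, and the ``complementary regime'' you invoke is empty precisely when you need it.

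What the paper actually does for $n \geq 1$: fix a shortest cycle $C$ of length $\ell \leq \lfloor 2\log_2(g-1)+2 \rfloor$ in the cubic core, and count how many degree-two vertices land on $C$ when boundary components are added as in subsection \ref{subsec:addingpunctures}. If at most two land on $C$, reduce the (slightly lengthened) cycle to a loop as in Lemma \ref{lem:girth}, costing at most $\ell+1$ moves. If at least three land on $C$, do not reduce to a loop; instead gather three consecutive degree-two vertices along $C$ (at most $\ell-1$ moves) and finish with the two moves of Lemma \ref{lem:5move}. Either branch gives $D_{g,n} \leq \ell+1 \leq \lfloor 2\log_2(g-1)+3 \rfloor$. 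This dichotomy on how many low-valence vertices are inserted into the cycle, together with the Lemma \ref{lem:5move} shortcut when many are, is the idea your proposal is missing and is what makes the ``$+3$'' work uniformly in $n$.
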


\begin{proof}
We begin with the case of closed surfaces.  By the discussion in Example \ref{ex:cage} regarding the number of vertices in a $(3,g)$-cage, it follows that for any cubic graph $\Gamma$ with $2(g-1)$ vertices, $$ \mbox{girth}(\Gamma)\leq \lfloor 2 \log_{2}(2(g-1)) \rfloor = \lfloor 2 \log_{2}(g-1)+2 \rfloor $$  
By Lemmas \ref{lem:properties} and \ref{lem:girth}, it follows that $D_{g,0} \leq \lfloor 2\log_{2}(g-1)+1 \rfloor .$  To complete the proof, it suffices to show that the process of adding $n$ boundary components as in subsection \ref{subsec:addingpunctures} to a closed surface cannot increase the distance to a separating curve by more than two elementary moves.  

The upper bound of $\lfloor 2\log_{2}(g-1)+1 \rfloor$ on the maximal distance to a pants decomposition containing a separating curve for closed surfaces is achieved by taking the smallest cycle $C$ in any graph $\Gamma(P)$ which has length at most $\lfloor 2\log_{2}(g-1)+2 \rfloor$ and then successively decreasing the length of cycle $C$ by elementary pants moves as in the proof of Lemma \ref{lem:girth}.  Consider what can happen to this cyclic subgraph as we add boundary components as in subsection \ref{subsec:addingpunctures}.  If the added boundary components do not affect the length of cycle $C,$ the upper bound is unaffected.  On the other hand, if the added boundary components increase the length of the cycle $C$ by adding one (two) degree two vertex (vertices) to the cycle $C$, then the distance to a separating curve increases by at most one (two).  However, once at least three degree two vertices have been added to the cycle $C,$ instead of reducing the cycle to a loop, we can instead use elementary moves to gather together three consecutive vertices of degree to obtain a pants decomposition with a separating curve cutting, as in Lemma \ref{lem:5move}.  Since in this situation the number of elementary moves needed to gather together at least three consecutive degree two vertices on $C$ is easily seen to be bounded above by one less than the length of the original cycle $C,$ the statement of the corollary follows in conjunction with the result of Lemma \ref{lem:5move}.  
\end{proof}

In the course of proving \ref{cor:leqlogg} we have in fact proven the following slight generalization of Lemma \ref{lem:girth} which proves useful in our consideration of low complexity examples in the appendix.  
\begin{cor} \label{cor:girth}
Let $P'\in \mathcal{P}(S_{g,n})$ be any pants decomposition obtained by adding boundary components to a pants decomposition $P \in \mathcal{P}(S_{g,m})$ for some $n >m$ as in subsection \ref{subsec:addingpunctures}.  Then the distance from $P'$ to a pants decomposition containing a separating curve is bounded above by the girth of $\Gamma(P)$ if $n=m+1$, or by one more than the girth of $\Gamma(P)$ for $n\geq m+2$.  
\end{cor}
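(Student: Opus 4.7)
The plan is to extract the argument already embedded in the proof of Corollary \ref{cor:leqlogg} and bookkeep it carefully as a function of $n-m$. Set $\gamma := \mbox{girth}(\Gamma(P))$ and fix a shortest cycle $C \subset \Gamma(P)$, so $C$ has $\gamma$ edges. By the proof of Lemma \ref{lem:girth}, $\gamma - 1$ elementary pants moves supported on $C$, each shortening the cycle by one, collapse $C$ to a loop and thereby produce a pants decomposition containing a separating curve. Each boundary component added as in subsection \ref{subsec:addingpunctures} either attaches a valence-one vertex to some vertex of $\Gamma(P)$, leaving $|C|$ unchanged, or subdivides an edge with a valence-two vertex, lengthening $|C|$ by one precisely when the subdivided edge lies on $C$. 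Let $k$ denote the number of such subdivisions that occur on edges of $C$. The descendant of $C$ in $\Gamma(P')$ then has length $\gamma + k$, so the Lemma \ref{lem:girth} collapse applied to it uses $\gamma + k - 1$ moves. For $n = m + 1$ we necessarily have $k \leq 1$, giving the bound $\gamma$; and for $n \geq m + 2$ with $k \leq 2$, the bound is $\gamma + 1$. This disposes of every subcase except $n \geq m + 2$ with $k \geq 3$.

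For that remaining subcase I would replace the ``collapse to a loop'' tactic by a ``gather and invoke Lemma \ref{lem:5move}'' tactic. Apply the cycle-shortening move of Lemma \ref{lem:girth} exactly $\gamma - 1$ times on the descendant of $C$, at each step arranging the move to remove an \emph{original} vertex of $C$ rather than a newly added valence-two vertex. After these $\gamma - 1$ moves the shortened cycle has length $k + 1$, contains only a single surviving original vertex, and therefore contains $k \geq 3$ consecutive added degree-two vertices; Lemma \ref{lem:5move} then supplies two further pants moves producing a separating curve that cuts off boundary components. The total cost is $(\gamma - 1) + 2 = \gamma + 1$, matching the claim.

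The main subtlety is verifying that each cycle-shortening move can indeed be targeted at an original vertex without spoiling the degrees of nearby added vertices. The move of Lemma \ref{lem:girth} deletes the middle vertex from a length-two subarc of the cycle, replacing it by a single edge joining the two endpoints, so the degrees of those endpoints (and of any added vertex serving as an endpoint) are unchanged; and because fewer than $\gamma$ original vertices have been removed at any point of the gathering phase, at least one original vertex remains on the cycle for the next move to target. With this local check in hand, the three stated bounds assemble as claimed and the corollary follows.
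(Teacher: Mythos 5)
Your overall plan mirrors the paper's own argument, which is a remark extracted from the proof of Corollary \ref{cor:leqlogg}: fix a shortest cycle $C$, count the number $k$ of added degree-two vertices landing on $C$, dispose of $k\leq 2$ by applying the collapse of Lemma \ref{lem:girth} to the lengthened cycle, and handle $k\geq 3$ by maneuvering into the hypothesis of Lemma \ref{lem:5move}. The cases $n=m+1$ and $n\geq m+2$ with $k\leq 2$ are handled correctly and exactly as the paper does. Where you diverge is in the $k\geq 3$ case: the paper \emph{gathers} three degree-two vertices together by shuffling boundary components between adjacent pairs of pants (the ``shuffling'' E2 moves recalled in the proof of Corollary \ref{cor:manypunctures2}, which transfer degree-two status along the cycle while leaving the cycle itself untouched), whereas you shorten the cycle by evicting original vertices from it.

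The gap is in your local check for that eviction. You assert that the shortening move ``deletes the middle vertex from a length-two subarc of the cycle, replacing it by a single edge joining the two endpoints, so the degrees of those endpoints \ldots are unchanged.'' Tracing the E2 action from subsection \ref{subsec:calc}: to evict $v$ from the arc $u - v - w$ \emph{while preserving all degrees}, the move must interchange the cycle edge $\{v,u\}$ at $v$ with an off-cycle edge at the pivot $w$ (or symmetrically at $u$). This requires the pivot to have degree three. If both cycle-neighbors of the targeted original vertex are newly added degree-two vertices, the only available interchange is with the empty set, which raises the pivot's degree to three and drops $v$'s degree to two; the endpoint's degree is \emph{not} unchanged, and you have lost one degree-two vertex from the cycle. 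Your closing observation that ``at least one original vertex remains on the cycle for the next move to target'' does not address this: the issue is not whether an original vertex is available, but whether removing it spoils a neighboring added vertex's degree. Repeated enough times, this can leave fewer than three degree-two vertices on the shortened cycle, breaking the handoff to Lemma \ref{lem:5move}. The paper's gathering mechanism sidesteps the problem entirely, since a shuffling move neither changes the cycle nor the number of degree-two vertices sitting on it; only their positions move. Either adopt that mechanism, or add an argument that a degree-three pivot can always be chosen (e.g.\ by only evicting an original vertex when it has an original cycle-neighbor, and switching to gathering once the remaining original vertices are isolated among added ones).
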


Having developed upper bounds on the distance of pants decomposition to pants decomposition containing separating curves, presently we shift our focus to lower bounds.  Recall that a separating curve $\gamma \in \mathcal{C}_{sep}(S)$ is said to \emph{cut off genus} if $S \setminus \gamma$ consists of two connected complexity at least one subsurfaces neither of which is topologically a sphere with boundary components.  Also recall that for a graph $\Gamma(V,E)$, a subset $C \subset V$ is called a \emph{non-trivial connected cut-set} of $\Gamma$ if $\Gamma[C]$ is a connected graph and $\Gamma[V \setminus C]$ has at least two connected components each consisting of at least two vertices or a vertex and a loop.  The following lemma gives a lower bound on the distance of a pants decomposition to a pants decomposition which cuts off genus, in terms of the girth of the graph and the cardinality of a minimal non-trivial connected cut-set of the graph. 

\begin{lem}\label{lem:cut-set} (Key Lemma)
For $P \in \mathcal{P}(S)$ and $\Gamma(P)$ its pants decomposition graph, let $d$ be the cardinality of a minimal non-trivial connected cut-set $C \subset \Gamma(P)$.  Then
$$d_{\mathcal{P}(S)}(P,P') \geq \min \{ \mbox{girth}(\Gamma(P))-1,d-1 \}$$
for $P'$ any pants decomposition containing a separating curve cutting off genus.
\end{lem}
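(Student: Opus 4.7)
The plan is to prove the lemma by induction on $k := d_{\mathcal{P}(S)}(P, P')$, tracking backward through a geodesic $P = P_0, P_1, \ldots, P_k = P'$ the cut-structure witnessed by $\gamma$ in $\Gamma(P')$. The target is to produce in each $\Gamma(P_i)$ either a non-trivial connected cut-set or a cycle whose size (resp.\ length) grows by at most one per backward step, yielding at $P = P_0$ a witness of size at most $k+1$, which is precisely the inequality $\min\{\mbox{girth}(\Gamma(P)) - 1,\, d - 1\} \leq k$.

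For the base case in $\Gamma(P_k) = \Gamma(P')$, since $\gamma$ is a separating curve cutting off genus, by the characterization in the last subsection of Section~\ref{sec:pantsgraph} the corresponding edge $e_\gamma$ has the property that its removal yields two cyclic connected components $A$ and $B$, with endpoints $v_1 \in A$ and $v_2 \in B$. In the generic case $\{v_1, v_2\}$ is a non-trivial connected cut-set of size $2$: cycles in $A$ and $B$ survive the removal of a single endpoint. In degenerate configurations---for instance $A = \{v_1\}$ carrying a loop, or $A$ realized as a length-two bigon on two vertices joined by a double edge---$\Gamma(P_k)$ already exhibits a cycle of length at most $2$, so we begin by tracking a cycle instead of a cut-set.

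The inductive step uses the strictly local description of elementary moves on $\Gamma$ from subsection~\ref{subsec:calc}: E1 moves leave $\Gamma$ invariant, while an E2 move affects $\Gamma$ only at a pair of adjacent vertices $\{w_1, w_2\}$, reshuffling up to four outside-incident edges between them while preserving the edge $w_1 w_2$, every edge not incident to $\{w_1, w_2\}$, and the set of outside neighbors of $\{w_1, w_2\}$. Given a non-trivial connected cut-set $C$ in $\Gamma(P_{i+1})$: if $\{w_1, w_2\} \cap C \in \{\emptyset, \{w_1, w_2\}\}$ then the component structure of $\Gamma[V \setminus C]$ is identical in $\Gamma(P_i)$ and $\Gamma(P_{i+1})$, so $C$ itself serves in $\Gamma(P_i)$; otherwise, when $|\{w_1, w_2\} \cap C| = 1$, enlarging $C$ by the missing vertex of $\{w_1, w_2\}$ preserves connectedness of $\Gamma(P_i)[C]$ via the edge $w_1 w_2$ and absorbs both E2-affected vertices, reducing to the previous case at the cost of increasing $|C|$ by one. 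The parallel argument for cycles---re-routing through $w_1 w_2$ when a cycle uses exactly one of $\{w_1, w_2\}$---bounds the analogous growth by one per step.

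The main obstacle lies in the enlargement subcase of the induction: one must verify that adding a single vertex to $C$ does not destroy the non-triviality of either component of $\Gamma(P_i)[V \setminus C]$, which requires careful use of the at-most-cubic degree bound (Lemma~\ref{lem:properties}) together with the preservation of outside neighbors of $\{w_1, w_2\}$. In the rare configurations where this check would collapse a component, one switches to the cycle-tracking branch, whose short witness is forced by the very degeneracy obstructing the cut-set. Iterating the inductive step $k$ times then delivers the claimed bound.
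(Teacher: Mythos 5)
Your inductive strategy of tracking a cut-set (or cycle) backward through a geodesic is a genuinely different route from the paper's, which avoids any geodesic analysis entirely: the paper fixes the separating curve $\alpha \in P'$ and looks at the connected subgraph $[\alpha] \subset \Gamma(P)$ of pairs of pants that $\alpha$ meets, observing that every curve of $P$ intersected by $\alpha$ must be changed before reaching $P'$, so $d_{\mathcal{P}}(P,P') \geq |E([\alpha])| \geq |V([\alpha])| - 1$; it then shows $|V([\alpha])| \geq \min\{\mbox{girth},d\}$ by noting that a smaller $[\alpha]$ would be acyclic (confining $\alpha$ to a planar subsurface) and too small to be a non-trivial connected cut-set, which together rule out $\alpha$ cutting off genus. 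This gives the bound in one shot rather than move-by-move.

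Unfortunately your accounting has a real gap. You initialize the witness at $\Gamma(P_k)$ with size $2$ (the endpoints $\{v_1,v_2\}$ of $e_\gamma$) and then perform $k$ backward steps each of which may grow the witness by one, arriving at size at most $k+2$, not $k+1$ as you assert; this only yields $d_{\mathcal{P}}(P,P') \geq \min\{\mbox{girth}-2, d-2\}$, strictly weaker than the lemma. To save the count you would need the base-case witness to have size $1$, and while a single endpoint $\{v_1\}$ often is a non-trivial connected cut-set, it fails exactly in the degenerate configurations you already flagged (e.g.\ $A$ a bigon), where one must instead track a cycle of length $2$ -- and then the same off-by-one recurs on the girth side. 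Separately, the enlargement subcase ($|\{w_1,w_2\} \cap C| = 1$, take $C' = C \cup \{w_2\}$) can genuinely destroy non-triviality: if $w_2$ lies in a two-vertex component $X = \{w_2, y\}$ of $\Gamma(P_{i+1})[V\setminus C]$ joined by a single edge with $y$ loopless, then $X \setminus \{w_2\}$ is a trivial singleton, and there need not be any short cycle in sight (the pair $\{w_2,y\}$ with a single edge carries no cycle at all), so the promised "switch to cycle-tracking" is not forced by this degeneracy. Both issues would need to be resolved -- most likely by tightening the base case and replacing the enlargement step with a more careful choice of $C'$ -- before this route can yield the stated inequality.
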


\begin{proof}
Let $\gamma$ be any curve in the pants decomposition $P$, and let $\alpha$ be any separating curve of the surface $S$ that cuts off genus.  It suffices to show that the number of elementary pants moves needed to take the curve $\gamma$ to $\alpha$ is at least  $\min \{ \mbox{girth}(\Gamma(P))-1, d-1 \}.$  In fact, considering the effect of an elementary pants move, it suffices to show that $\alpha$ non-trivially intersects at least  $\min \{ \mbox{girth}(\Gamma(P)), d \}$ different connected components of $S \setminus P$.

Corresponding to $\alpha$ consider the subgraph $[\alpha] \subset \Gamma(P)$ consisting of all vertices in $\Gamma(P)$ corresponding to connected components of $S \setminus P$ non-trivially intersected by $\alpha,$ as well as all edges in $\Gamma(P)$ corresponding to curves of the pants decomposition $P$ non-trivially intersected by $\alpha.$  By construction, the subgraph $[\alpha]$ is connected.  Note that the subgraph $[\alpha]$ need not be equal to the induced subgraph $\Gamma[\alpha],$ but may be a proper subgraph of it.  Nonetheless, $V(\Gamma[\alpha]) = V([\alpha]).$   (See Figure \ref{fig:cut-set} for an example of a subgraph  [a] $\subset \Gamma(P).$) 

As noted, it suffices to show $|V(\Gamma[\alpha])| \geq \min \{ \mbox{girth}(\Gamma(P)),\; d \}.$  Assume not, by the girth condition it follows that $\Gamma[\alpha]$ is acyclic.  However, this implies that $\alpha$ is entirely contained in a union connected components of $S\setminus P$ such that in the ambient surface $S,$ the connected components glue together to yield an essential subsurface $Y,$ which is topologically a sphere with boundary components.  Moreover, by the cardinality of the minimal non-trivial connected cut-set condition, it follows that the removal of the essential subsurface $Y,$ or any essential subsurface thereof, from the ambient surface $S$ does not, non-trivially separate $S$.  In particular, for all $U \subset Y,$ $S \setminus U$ consists of a disjoint union of at most one non-trivial essential subsurface as well as some number of pairs of pants.  It follows that $\alpha$ cannot be a separating curve cutting off genus.  
\end{proof} 

\begin{figure}[bhtp]
\centering
\includegraphics[height=2 cm]{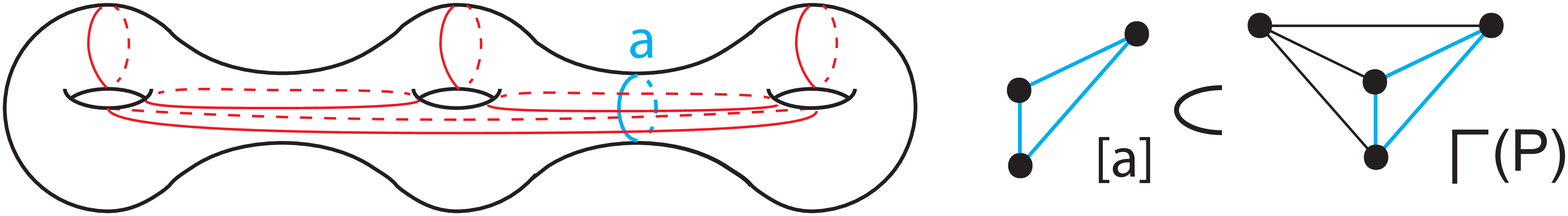}
\caption{An example of a subgraph $[a] \subset \Gamma(P)$ corresponding to a separating curve $a \subset S_{3,0},$ cutting off genus.  In this example, the girth of $\Gamma(P)$ is three and there are no non-trivial connected cut-sets of $\Gamma(P)$ .  Thus, by Lemma \ref{lem:cut-set}, the distance from $P$ to any pants decomposition with a separating curve cutting off genus is at least two.  In fact, it is not hard to see that the distance from $P$ to a pants decomposition $P'$ containing the curve $a$, which cuts off genus, is exactly two. }\label{fig:cut-set}
\end{figure}

An immediate consequence of Example \ref{ex:cage}, Corollary \ref{cor:leqlogg}, and Lemma \ref{lem:cut-set}, for an infinite family of pants decompositions $\{P_{m}\}_{m=1}^{\infty}$ of closed surfaces of genera $g_{m}$ whose pants decomposition graphs $\Gamma(P_{m})$ correspond to $(3,m)$-cages, it follows that $D_{g_{m},0}= \Theta(\log(g_{m})).$  It should be stressed however that because the number of vertices in $(3,m)$-cages grows exponentially, the family of $(3,m)$-cages cannot be used to prove the desired sharpness in Theorem \ref{thm:main}.  Furthermore, the family of $(3,m)$-cages are a highly non-constructive family of examples as to date outside of existence, little is known regarding $(3,m)$-cages for $m\geq 13$, \cite{exoo}.

In Section \ref{sec:construction} we produce a constructive family of 3-regular graphs, $\Gamma_{2m},$ in order to establish that the sharp asymptotic equality holds for all sequences of genera.  Specifically, for any even number $2m\geq 140$, such that $g$ is the largest integer satisfying $ \left(  \lceil \frac{2^{g}-4}{g-4}  \rceil \right) \cdot g \leq 2m,$ there exists a graph, $\Gamma_{2m},$ such that $|V(\Gamma_{2m})|=2m,$ girth$(\Gamma_{2m})=g,$ and any connected cut-set of the graph contains at least $ \lfloor \frac{g}{2} \rfloor$ vertices.  Furthermore, for any fixed number $n$ of boundary components, we can add $n$ boundary components to our graphs, $\Gamma_{2m},$ creating a family of pants decomposition graphs $\Gamma^{n}_{2m},$ whose corresponding pants decompositions similarly have girth, minimum non-trivial cut-set size, and distance between valence less than three vertices growing logarithmically in the vertex size of the graph.  By Lemma \ref{lem:cut-set}, the fact that girth and minimum non-trivial connected cut-set size grow logarithmically in the vertex size of the graph implies that the distance between pants decompositions with the given graphs as pants decomposition graphs to any pants decompositions containing a separating curve cutting off genus, grows logarithmically in the vertex size of the graph.  Moreover, the fact that the distance between valence less than three vertices grows logarithmically in the vertex size of the graphs, implies that the distance between pants decompositions with the given graphs as pants decomposition graphs and any pants decompositions containing a separating curve cutting off boundary components also grows logarithmically in the vertex size of the graphs.  Hence, as a corollary of the construction in Section \ref{sec:construction} we have:

\begin{cor} \label{cor:logplus}
Let  $n\in \N$ be fixed.  Then $D_{g,n}= \Theta(\log(g)).$  
\end{cor}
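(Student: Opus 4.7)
The plan is to combine the upper bound already established with a lower bound furnished by the constructive family of Section~\ref{sec:construction}. For the upper bound, Corollary~\ref{cor:leqlogg} gives $D_{g,n} \leq \lfloor 2\log_{2}(g-1) + 3\rfloor$ uniformly in $n$, so $D_{g,n} = O(\log g)$ without further work.

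For the lower bound, I would invoke the family $\Gamma^{n}_{2m}$ promised in Section~\ref{sec:construction}: an explicit, connected, at most cubic graph on roughly $2m$ vertices whose girth and whose minimum non-trivial connected cut-set each grow like $\Theta(\log m)$, and whose pairwise distances among vertices of valence less than three also grow like $\Theta(\log m)$. By Lemma~\ref{lem:properties}, $\Gamma^{n}_{2m}$ is realized as $\Gamma(P^{n}_{m})$ for some pants decomposition $P^{n}_{m}\in\mathcal{P}(S_{g_{m},n})$ with $g_{m}=\Theta(m)$, so that $\log m = \Theta(\log g_{m})$. To bound $d_{\mathcal{P}}(P^{n}_{m},\mathcal{P}_{sep}(S_{g_{m},n}))$ from below I split into the two cases S1 and S2 of Section~\ref{sec:pantsgraph}. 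For S1, a separating curve cutting off genus, Lemma~\ref{lem:cut-set} applied to $\Gamma^{n}_{2m}$ immediately yields distance at least $\min\{\mathrm{girth}(\Gamma^{n}_{2m}),d\}-1 = \Omega(\log m)$, where $d$ is the minimum non-trivial connected cut-set cardinality of $\Gamma^{n}_{2m}$.

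The main obstacle is case S2, a separating curve cutting off boundary components, for which Lemma~\ref{lem:cut-set} is silent. Here I would argue along the lines of Lemma~\ref{lem:5move} and Corollary~\ref{cor:manypunctures2}: a pants decomposition $P'$ containing a separating curve of type S2 must exhibit three consecutive vertices of degree at most two in $\Gamma(P')$, and the elementary pants move E2 described in Section~\ref{subsec:calc} only reshuffles edges locally between two adjacent vertices, so a single E2 move can only transport one unit of low-valence contribution across a single edge of the pants decomposition graph. Consequently, assembling three consecutive low-valence vertices starting from $\Gamma^{n}_{2m}$, whose vertices of valence less than three are by construction pairwise at graph distance $\Omega(\log m)$, demands $\Omega(\log m)$ elementary moves. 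This step is where I expect the technical difficulty to concentrate, since one must track carefully how valence migrates under sequences of E2 moves and rule out shortcuts through the girth structure; a bookkeeping argument analogous to the disjoint-geodesics accounting in the proof of Corollary~\ref{cor:manypunctures2} should suffice.

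Combining the two cases gives $d_{\mathcal{P}}(P^{n}_{m},\mathcal{P}_{sep}(S_{g_{m},n}))=\Omega(\log g_{m})$, which together with the uniform upper bound from Corollary~\ref{cor:leqlogg} establishes $D_{g,n}=\Theta(\log g)$ for every fixed $n$.
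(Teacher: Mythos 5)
Your overall plan matches the paper: the upper bound is Corollary~\ref{cor:leqlogg}, and the lower bound is obtained from the family $\Gamma^{n}_{2m}$ of Section~\ref{sec:construction} by treating the two types of separating curves separately — type S1 via Lemma~\ref{lem:cut-set} and type S2 via the spacing of the low-valence vertices. The S1 branch is handled correctly, and your reduction $\log m = \Theta(\log g_m)$ and the realization via Lemma~\ref{lem:properties} are fine.

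However, the S2 branch contains a false intermediate claim. You assert that a pants decomposition $P'$ containing a separating curve of type S2 ``must exhibit three consecutive vertices of degree at most two in $\Gamma(P')$.'' This is the \emph{converse} of Lemma~\ref{lem:5move}, and it does not hold. For instance, take the acyclic side of the separating edge to be a ``cherry'': a central vertex $c$ of degree three carrying two leaves $\ell_1,\ell_2$ of degree one, with the cut-edge running from $c$ to a degree-three vertex of the cyclic side. Then $\ell_1$ and $\ell_2$ are the only low-valence vertices near the cut-edge, and both are adjacent only to $c$, which has degree three; there are not even two, let alone three, consecutive vertices of degree at most two. So the combinatorial trigger you rely on to force many E2 moves is simply not present, and the ``valence-migration'' bookkeeping you sketch would have nothing to latch onto.

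The correct S2 argument should instead run directly parallel to the proof of Lemma~\ref{lem:cut-set}, working with the subgraph $[\alpha]\subset\Gamma(P)$ of the \emph{starting} decomposition $P$ and showing $|V([\alpha])|\geq\lfloor g/2\rfloor$. Concretely: if $|V([\alpha])|<\lfloor g/2\rfloor$ then $\Gamma[\alpha]$ is acyclic and $[\alpha]$ is not a non-trivial connected cut-set, so its complement in $\Gamma(P)$ has a unique large component $K$, which lies entirely on one side of $\alpha$. Because the low-valence vertices of $\Gamma^{n}_{2m}$ are pairwise at distance $\geq\lfloor g/2\rfloor$ and each has degree exactly two, a connected subgraph with fewer than $\lfloor g/2\rfloor$ vertices meets at most one of them, so the union $W$ of pairs of pants corresponding to $[\alpha]$ contains at most one boundary component of $S$. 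If $K$ sits on the $Y$-side then $Z\subset W$ is planar, forcing $S$ to have genus zero, a contradiction; so $K$ sits on the $Z$-side, whence $Y\subset W$ contains at most one boundary component of $S$, contradicting that an S2 curve must cut off at least two. This yields the required $\Omega(\log g)$ bound for S2 and closes the argument; it also explains why the construction in Section~\ref{sec:construction} insists on adding boundary components as \emph{degree-two} vertices that are widely spaced.
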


The proof of the main theorem, Theorem \ref{thm:main}, follows immediately from the combination of Lemmas \ref{lem:g0} and \ref{lem:g1} as well as Corollaries \ref{cor:manypunctures1}, \ref{cor:manypunctures2},  \ref{cor:leqlogg}, and \ref{cor:logplus}
\qed.

\section{Construction of Large Girth, Log Length Connected Graphs} \label{sec:construction}
We first describe a construction for a family, $\Gamma_{g},$ of 3-regular girth $g\geq5$ graphs with $$ \left( \lceil \frac{2^{g}-4}{g-4}\rceil  \right)  \cdot g + [ \left( \lceil \frac{2^{g}-4}{g-4}\rceil  \right)  \cdot  g] \text{(mod 2)}$$ vertices (where the final term is simply to ensure the total number of vertices is even), which have the property that any connected cut-set of $\Gamma_{g}$ contains at least $\lfloor\frac{g}{2} \rfloor$ vertices.  Afterward, we generalize our construction, interpolating between the family of graphs $\Gamma_{g}.$  Specifically, for all $m \in \N$, $m\geq 70$ such that $g\geq 5$ is the largest integer satisfying $2m \geq  \left(  \lceil \frac{2^{g}-4}{g-4}  \rceil \right) \cdot  g,$ there exists a 3-regular girth $g$ graph $\Gamma_{2m}$ with $2m$ vertices and the property that any connected cut-set of the graph contains at least $ \lfloor  \frac{g}{2} \rfloor$ vertices.  Finally, we demonstrate that for any fixed number of boundary components $n,$ we can add $n$ boundary components to our graphs $\Gamma_{2m}$ yielding a family of graphs $\Gamma^{n}_{2m}$ with girth, non-trivial minimum cut-set size, and the distance between valence less than three vertices growing logarithmically in the vertex size of the graph. 

\subsection{Construction of  $\Gamma_{g}$}
Begin with $ \left( \lceil \frac{2^{g}-4}{g-4}\rceil  \right) $ disjoint cycles each of length $g$ (possibly one of length g+1 if necessary to make the total number of vertices even).  Then, we chain together these disjoint cycles into an at most 3-regular connected tower $T_{g}$,  connecting each cycle to its neighboring cycle(s) by adding two edges between pairs of vertices, one from each cycle, such that each of the two vertices from the same cycle, to which edges are being attached, are of distance at least $\lfloor\frac{g}{2} \rfloor.$  See Figure \ref{fig:graphg2g} for an example in the case $g=8.$  

\begin{figure}[htpb]
\centering
\includegraphics[height=5 cm]{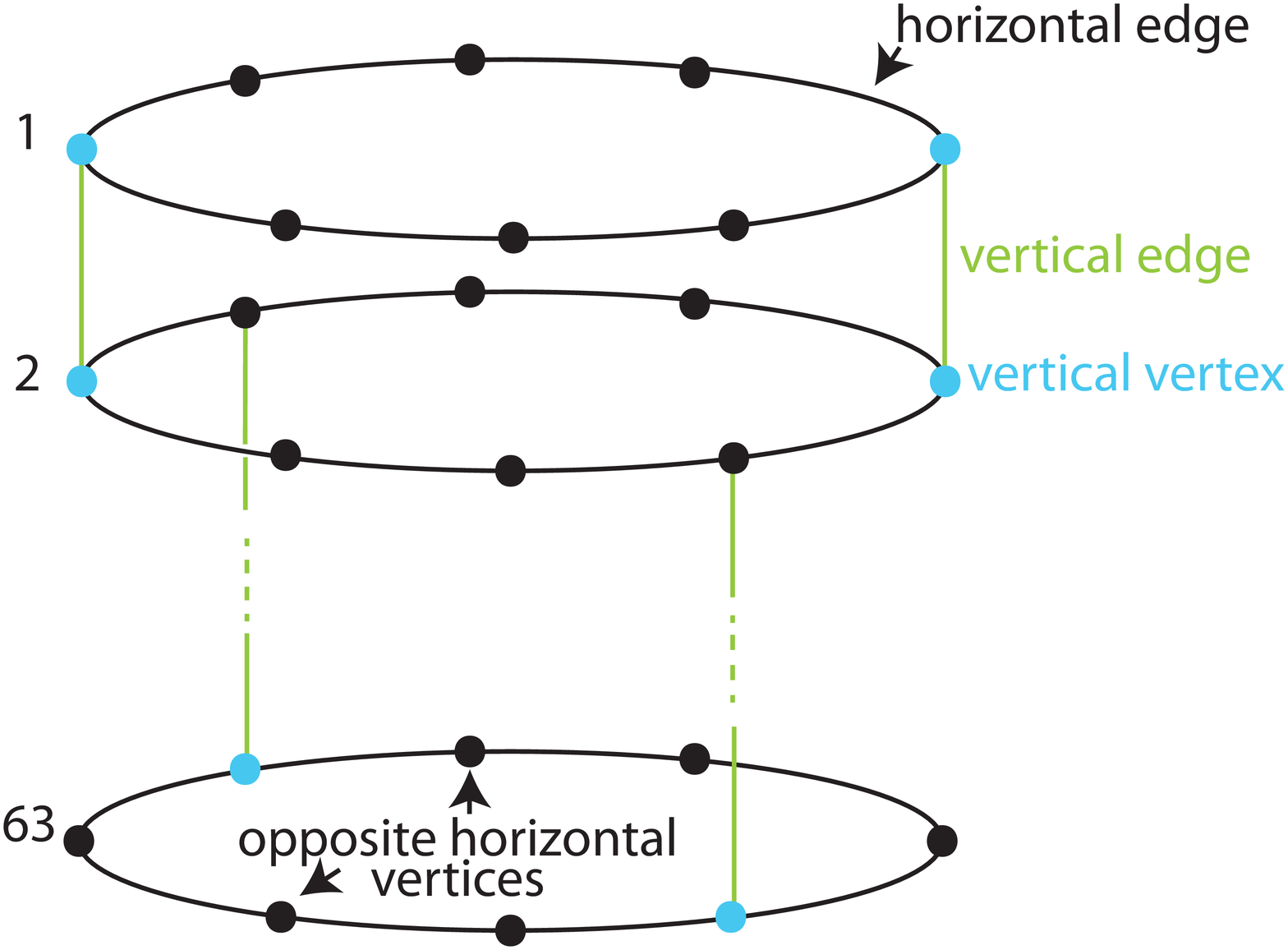}
\caption{$T_{8},$ an at most 3-regular girth eight tower graph with $|V|=\left( \lceil \frac{2^{8}-4}{8-4}\rceil  \right) \cdot 8=63 \cdot 8.$}\label{fig:graphg2g}
\end{figure}

As motivated by Figure \ref{fig:graphg2g}, we call edges of the original disjoint cycles \emph{horizontal edges} and edges that were added to complete it into a tower \emph{vertical edges}.  Vertices adjacent to vertical edges are called \emph{vertical vertices} and likewise for horizontal edges.  Two vertical edges between the same cycles are called \emph{opposite vertical edges} and their corresponding vertices to which opposite vertical edges are incident are called \emph{opposite vertical vertices}.  Finally, each pair of opposite vertical vertices on the same cycle gives rise to a \emph{partition} of the horizontal vertices of the given cycle corresponding to the connected components of the cycle in the complement of the vertical vertices.

In the following remark we record some observations regarding our towers, $T_{g}.$
\begin{rem} By construction, the tower graphs, $T_{g},$ constructed above have the following properties:
\renewcommand{\labelenumi}{(\alpha{enumi})}
\begin{description}
\item[T1]  $T_{g}$ has $ \left( \lceil \frac{2^{g}-4}{g-4}\rceil  \right)  \cdot  g + [ \left( \lceil \frac{2^{g}-4}{g-4}\rceil  \right)  \cdot g] $(mod 2) vertices.
\item[T2]  $T_{g}$ is an at most 3-regular and at least 2-regular graph with girth $g.$  
\item[T3]  If we denote the subset of vertices of $T_{g}$ of valence two by $V^{T_{g}}_{2},$ then $|V^{T_{g}}_{2}| \geq 2^{g}.$  
\item[T4]  Any connected cut-set of $T_{g}$ has at least $\lfloor\frac{g}{2} \rfloor$ vertices. 
\end{description}
\end{rem}

\subsection{Algorithm completing $T_{g}$ to a 3-regular graph $\Gamma_{g}$}
Presently we describe a constructive algorithm to add edges to the tower $T_{g}$ completing it to a 3-regular graph $\Gamma=\Gamma_{g}$ which also has girth $g,$ and retains the property that  any connected cut-set of $\Gamma_{g}$ has at least $\lfloor\frac{g}{2} \rfloor$ vertices.  The following process is motivated by a theorem from \cite{biggs}.  Throughout, by abuse of notation, we will always refer to the graph that has been constructed up to the current point as $\Gamma.$  In terms of ensuring the girth condition, the main observation, to be used implicitly throughout, is that removing edges from a graph never decreases girth, while adding an edge connecting vertices which were previously at least distance $g-1$ apart, in a girth at least $g$ graph, yields a girth at least $g$ graph.

\begin{description}
\item[Step One] An Easy Opportunity to Add an Edge
$\\$ $\indent$
If $\Gamma$ is 3-regular, we're done.  If not, fix a vertex $v \in V^{T_{g}}_{2}$ of valence two.  If there exists some other vertex $x \in V^{T_{g}}_{2}$ with $d_{\Gamma}(v,x)\geq g-1,$ add an edge between x and v.  

\item[Step Two]  Exhaust Easy Opportunities
$\\$ $\indent$
Iterate step one until all possibilities to add edges to $\Gamma$ are exhausted.

\item[Step Three] One Step Backward, Two Steps Forward
$\\$ $\indent$
If $\Gamma$ is 3-regular, we're done.  If not, since the total number of vertices is even, there must exist at least two vertices, $x$ and $y,$ of valence two.  Consider the sets $U=N^{\Gamma}_{g-2}(x) \cup N^{\Gamma}_{g-2}(y) $ and $I=N^{\Gamma}_{g-2}(x) \cap N^{\Gamma}_{g-2}(y).$  Due to the valence considerations, since $x,y$ are valence two vertices in an at most cubic graph it follows that $|N^{\Gamma}_{g-2}(x) |\leq 1 + 2 + ... +2^{g-2} = 2^{g-1}-1,$ and similarly for $N^{\Gamma}_{g-2}(y).$  Note that $|U| = |N^{\Gamma}_{g-2}(x) |+ |N^{\Gamma}_{g-2}(y) | - |I| \leq 2^{g}-2 - |I|.$  Then 
consider the set $W= V^{T_{g}}_{2} \setminus U.$  Since $|V^{T_{g}}_{2}| \geq 2^{g},$ it follows that $|W| \geq 2 + |I|.$  In particular, the set W is non-empty.  Furthermore, considering that step two was completed to exhaustion, it follows that $\forall w\in W,$ $w$ is of valence three in $\Gamma.$  Moreover, by definition, the vertex $w$ is of valence two in $T_{g}.$  Denote the vertex that is connected to $w$ in $\Gamma$ but not in $T_{g}$ by $w'.$  Perforce, $w'$ is distance at least $g-2$ from both $x$ and $y.$  In fact, we can assume that $w' $ is not exactly distance $g-2$ from both $x$ and $y$ because $|W| > |I|.$  For concreteness, assume $d_{\Gamma}(x,w')\geq g-1.$  

Remove from $\Gamma$ the edge $e$ connecting $w$ to $w',$ and in its place include two edges: $e_{1}$ between $x$ and $w',$ and $e_{2}$ between $w$ and $y.$  Adding the two edges $e_{1}$ and $e_{2}$ does not decrease girth to less than $g$ as they each connect vertices that were distance at least $g-1$ apart:  Because $\Gamma$ was girth at least $g,$ after removing $e,$ the vertices $w$ and $w'$ are distance at least $g-1.$  Hence, even after adding edge $e_{1}$ we can still be sure that the vertices $y$ and $w$ remain distance at least $g-1$ apart, thereby allowing us to add edge $e_{2}$ without decreasing girth to less than $g.$

\item[Step Four] Repeat
$\\$ $\indent$
If $\Gamma$ is not yet 3-regular, return to step three.
\end{description}
Note that the above completion algorithm necessarily terminates by induction as Step three can always be performed if the graph is not yet 3-regular, and the net effect of Step three is a to increase the number of edges in the at most 3-regular graph by one edge at a time.  Moreover, note that the algorithm never removes edges from the tower $T_{g}$, and hence the resulting graph $\Gamma_{g}$ includes the tower $T_{g}$ as a subgraph.  Since by construction the graph $\Gamma_{g}$ has girth $g,$ all that remains is to verify that the completed graph $\Gamma_{g}$ has the property that any connected cut-set has at least $\lfloor \frac{g}{2} \rfloor$ vertices.    

\begin{figure}[htpb]
\centering
\includegraphics[height=6 cm]{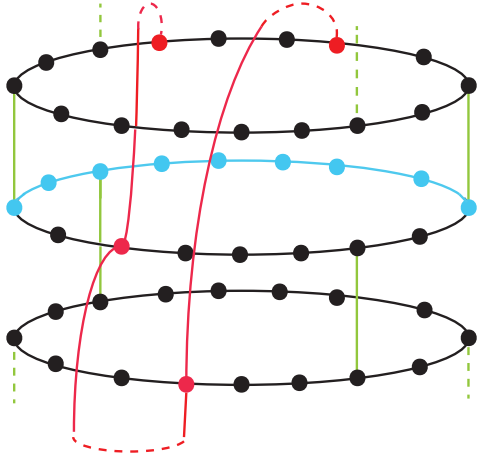}
\caption{Examples of connected cut-sets in $\Gamma_{g}.$  The red cut-set does not contain any vertical edges, but contains at least half the vertices of a cycle $\Gamma_{g}$ (not all the edges are drawn).  The blue cut-set contains vertices on two opposite vertical edges. } \label{fig:concutset}
\end{figure}

\begin{lem}Any connected cut-set of $\Gamma_{g}$ has at least $\lfloor \frac{g}{2} \rfloor$ vertices.  
\end{lem}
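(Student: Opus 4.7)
My plan is to combine the tower-subgraph property T4 with the girth condition on $\Gamma_g$, proceeding in three steps.

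\textbf{Step 1 (reduce to a $T_g$-cut).} I will first note that, since the completion algorithm never removes any edge of $T_g$, the tower $T_g$ sits inside $\Gamma_g$ as a spanning subgraph. Hence $\Gamma_g \setminus C$ being disconnected forces $T_g \setminus C$ to be disconnected (fewer edges can only increase the number of components), so $C$ is a (possibly non-connected) cut-set of $T_g$. By the linear tower structure, any cut-set of $T_g$ must sever both vertical edges between some consecutive pair of cycles $c_i, c_{i+1}$. Consequently $C$ contains two designated vertices $p, q$, one endpoint chosen from each of these two vertical edges.

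\textbf{Step 2 (short tower paths from $p$ to $q$).} Inside $T_g \subset \Gamma_g$ there exist two internally vertex-disjoint tower paths from $p$ to $q$ of length at most $\lfloor g/2 \rfloor + 1$: if $p, q$ both lie on $c_i$ (the ``red'' configuration in Figure~\ref{fig:concutset}), the two arcs of $c_i$ yield paths of lengths $\lfloor g/2 \rfloor$ and $\lceil g/2 \rceil$; if $p, q$ lie on different cycles (the ``blue'' cross configuration), the two paths traverse the two vertical edges together with arcs in $c_i$ and $c_{i+1}$ of length $\lfloor g/2 \rfloor$ each, giving total length $\lfloor g/2 \rfloor + 1$.

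\textbf{Step 3 (girth bound).} The connectedness of $\Gamma_g[C]$ supplies a path $\gamma \subset \Gamma_g[C]$ from $p$ to $q$ of length at most $|C|-1$. If $\gamma$ is internally vertex-disjoint from one of the tower paths $\pi$ of length $L \leq \lfloor g/2 \rfloor + 1$, then $\gamma \cup \pi$ is a cycle in $\Gamma_g$ of length $|\gamma| + L$, so the girth hypothesis forces $|\gamma| + L \geq g$, whence $|C| \geq |\gamma| + 1 \geq g - L + 1 \geq \lceil g/2 \rceil \geq \lfloor g/2 \rfloor$.

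The main obstacle will be the case in which $\gamma$ shares internal vertices with both tower paths simultaneously. In that situation, however, $C$ must already contain vertices lying along an entire tower arc (of length $\lfloor g/2 \rfloor$ in the red case or $\lfloor g/2 \rfloor + 1$ in the blue case), which by itself forces $|C| \geq \lfloor g/2 \rfloor + 1$; alternatively one can split $\gamma$ at the first shared vertex $w$ and reapply the girth argument to the two resulting sub-paths from $p$ to $w$, where the girth inequality applied to the smaller cycle still yields $|\gamma| \geq \lceil g/2 \rceil - 1$. A careful case-check guided by the two configurations shown in Figure~\ref{fig:concutset} finishes the proof.
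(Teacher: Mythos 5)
Your Step~1 rests on a false structural claim, and this breaks the case analysis that the rest of the proof depends on. It is not true that ``any cut-set of $T_g$ must sever both vertical edges between some consecutive pair of cycles.'' The paper's ``red'' configuration is exactly a counterexample: a cut-set that removes two non-vertical vertices lying on a single horizontal cycle $c_i$, cutting off a short arc of $c_i$ from the rest of the tower while every vertical edge of $T_g$ remains intact. Your Step~2 then attempts to accommodate a red configuration, but only under the (already false) stipulation that $p,q$ were chosen as endpoints of the two vertical edges between $c_i$ and $c_{i+1}$; when both are on $c_i$ these are opposite vertical vertices, which is precisely the \emph{blue} configuration, not the red one. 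The genuine arc-cutting case — where $p,q$ are interior to a partition and need not be vertical at all — is thus never actually analyzed. You would need to first establish the dichotomy the paper uses (either $C$ contains two opposite vertical vertices, or $C$ cuts off a piece of a single horizontal cycle) before choosing $p,q$.

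The other gap is in the final paragraph of Step~3. The case in which $\gamma\subset\Gamma_g[C]$ meets both tower paths internally is dispatched by two unproved assertions. It is not clear why ``$C$ must already contain vertices lying along an entire tower arc'': $\gamma$ travels in $\Gamma_g[C]$ and can use completion edges to hop between the two tower paths without tracing either in full, and nothing you wrote rules this out. The alternative of splitting at the first shared vertex $w$ does not obviously close either, because the resulting ``cycle'' may reuse tower edges and so need not be an honest cycle to which the girth bound applies. The paper handles all of this by a different mechanism: it argues that a putative connected cut-set of size less than $\lfloor g/2\rfloor$ that avoids both standard configurations cannot in fact disconnect $\Gamma_g$, since it would have to be a connected subtree of $T_g$ with connected complement. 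Your proposal contains nothing playing that role. The girth mechanism in Step~3 is the same core idea the paper uses and is sound where it applies cleanly, but the reduction to that clean situation is where the actual work lies, and it is missing.
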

\begin{proof}
Let $C$ be a connected cut-set of $\Gamma_{g}$.  Without loss of generality we can assume that $C$ has the property that it does not contain any connected proper subcut-set.  In particular, due to valence consideration, $C$ cuts the tower into exactly two pieces.  

A couple of preliminary observations are in order.  Firstly, recall that by construction, $\Gamma_{g}$ contains $T_{g}$ as a subgraph.  Hence, $C$ must include vertices (not necessarily connected in the tower) that cut the subgraph $T_{g}.$  Secondly, in a girth $g$ graph any non-backtracking walk of length at most $\lfloor \frac{g}{2} \rfloor$ is a geodesic.  


Consider the options for a cut-set of $T_{g}$.  Since $T_{g}$ is \emph{2-connected}, any cut-set of $T_{g}$ must contain at least two vertices.  There are two types of two-vertex cut-sets: either the cut-set contains two opposite vertical vertices and hence cuts the tower horizontally (see the blue cut-set in Figure \ref{fig:concutset}), or the cut-set cuts off a portion of a single horizontal cycle of $T_{g}$ from the rest of the tower (see the red cut-set in Figure \ref{fig:concutset}).  In the first case, the statement of the lemma holds because the two vertical vertices are distance $\lfloor \frac{g}{2} \rfloor$ apart in the tower and therefore in $\Gamma_{g}$ as well.  Similarly, in the second case, the desired property holds because the two vertices are distance less than $ \lfloor \frac{g}{2} \rfloor$ apart in the tower and hence any path connecting them in $\Gamma_{g}$ (disjoint from the segment being cut off) must have length at least $\lfloor \frac{g}{2} \rfloor.$

\begin{figure}[htpb]
\centering
\includegraphics[height=5 cm]{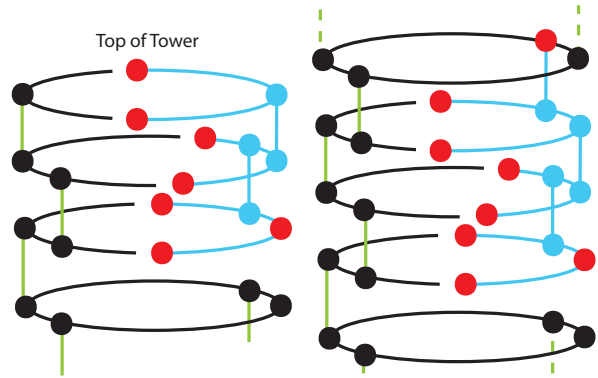}
\caption{Cut-sets of $\Gamma_{g}$  which never contains two opposite vertical vertices, and fails to cut off a portion of a single horizontal cycle of $T_{g}$ from the rest of the tower.   Any such connected cut-set either has at least $\lfloor \frac{g}{2} \rfloor$  vertices or is not actually a connected cut-set, but rather a connected subtree of the tower whose complement in the tower is connected.  } \label{fig:concutset4}
\end{figure}

To see that two aforementioned options for cut-sets are all we need to consider, assume to the contrary.  That is, assume there is a connected cut-set $C$ of $\Gamma_{g}$, with at most $\lfloor \frac{g}{2} \rfloor$ vertices, that doesn't contains two opposite vertical vertices, and fails to cut off a portion of a single horizontal cycle of $T_{g}$ from the rest of the tower.  Considering the connectivity of the tower $T_{g},$ which lies as a subgraph in $\Gamma_{g},$ it follows that  on a continuous sequence of horizontal cycles in the tower $T_{g}$ of differing heights, e.g. from height level $i$ to $j,$ the cut-set $C$ must contain pairs of horizontal vertices, in different partitions.  Additionally, unless the horizontal cycles $i$ and $j$ are either the top or bottom horizontal cycles of the tower, in each of these horizontal cycles the cut-set $C$ must contain a vertical vertex.  (See examples in Figure \ref{fig:concutset4}).  

In this case, since pairs of horizontal vertices on the same cycles are distance at most $ \lfloor \frac{g}{2} \rfloor$ apart in the tower, our vertex bound on $C$ ensures that $C$ contains an entire geodesic between the pair of horizontal vertices in the tower.  Accordingly, as the pairs of horizontal vertices in each horizontal cycle were in different partitions of the horizontal cycle it follows that $C$ contains a continuous sequence of vertical vertices for all height levels of the tower from $i$ to $j.$  Moreover, we can also assume that this continuous sequence of vertical vertices are all connected to each other, for otherwise $C$ would contain a pair of opposite vertical vertices which we assumed not to be the case.  It turns out that our desired connected cut-set $C$ is not in fact a cut-set of the tower, but rather a connected subtree of the tower whose complement in the tower $T_{g}$, and hence in the graph $\Gamma_{g},$ is connected. 
\end{proof}

\subsection{Construction of  $\Gamma_{2m}$}
Having completed the construction of the graphs $\Gamma_{g},$ presently we show that for any even number of vertices $2m$ such that $2m \geq V(\Gamma_{g})$, for some $g$, we can construct a 3-regular girth $g$ graph on $2m$ vertices, which we denote $\Gamma_{2m},$ with the property that any connected cut-set of $\Gamma_{2m}$ contains at least  $\lfloor \frac{g}{2} \rfloor$ vertices.  In fact, we can construct the graphs $\Gamma_{2m}$ using the exact same process as in the construction of $\Gamma_{g}$ with the exception that we now start with additional cycles in our initial tower which is subsequently completed to a cubic graph.  Specifically, to construct $\Gamma_{2m}$, we begin with $\lfloor \frac{2m}{g} \rfloor$ cycles of length $g$ and $(g+1)$ as necessary.  The conclusions in this case follow exactly as above.  

\subsection{Adding a fixed number $n$ of boundary components to $\Gamma_{2m}$}
For any fixed number $n$ of boundary components, by basic counting considerations we can add $n$ boundary components to our graphs, $\Gamma_{2m},$ yielding a family of graphs $\Gamma^{n}_{2m},$ such that the distance between the $n$ added boundary components grows logarithmically in the vertex size of the graphs $\Gamma^{n}_{2m}.$  Specifically, for a fixed number $n$ of added boundary components, past some minimal threshold for $2m$ we can easily ensure that no two added boundary components in $\Gamma_{2m}$ are within distance $\lfloor \frac{g}{2} \rfloor$ from each other.  This is because for $x,$ an added boundary component in $\Gamma_{2m},$ since $x$ is a degree two vertex in an at most cubic graph, $$|N_{\lfloor \frac{g}{2} \rfloor} (x)| \leq 2^{\lfloor \frac{g}{2} \rfloor +1},$$ while $|V(\Gamma_{2m})| \geq 2^{g}.$
\section{Appendix: Low Complexity Examples} \label{sec:ex}

\begin{note} \label{note:s}  \cite{sloane} [A002851] The number of connected simple (no loops or multiple edges) cubic graphs with 2n vertices for $n=1, 2, 3, 4, 5, 6, 7, 8, 9, ..$ is $0, 1, 2, 5, 19, 85, 509, 4060, 41301, ...$  Moreover, classifying the above graphs in terms of girth, we have the following table based on \cite{robinson}:
\begin{center}
\begin{tabular}{|c|c|c|c|c|c|c|}
\hline
Vertices & girth $\geq 3$ & girth $\geq 4$ & girth $\geq 5$ & girth $\geq 6$  \\ 
\hline   
4 & 1 & 0 & 0 &0 \\
\hline   
6 & 2 & 1 & 0 &0\\
\hline   
8 & 5 & 2  &0 &0\\
\hline   
10 & 19 & 6 & 1 &0 \\
\hline   
12 & 85 & 22 & 2 &0\\
\hline   
14 & 509 & 110 & 9 & 1 \\
\hline   
\end{tabular} 
\end{center}

$\\$

\end{note}

The following is a table of some values of $D_{g,n}.$
\begin{center}
\begin{tabular}{|c||c|c|c|c|c|c|c|c|c|c|c|c|c|c|c|}
\hline
$8$ & $0$ & $2$ & $2$ &  $3$ & $4$ & $4$ & $4$ & $5$ & $5$ \\
\hline
$7$ & $0$ & $2$ & $2$  & $3$ & $4$ & $4$ & $4$ & $5$ & $6$ \\
\hline
$6$ & $1$ & $2$ & $3$ & $4$  & $4$ & $4$ & $5$ & $5$ & $6$\\ 
\hline
$5$ &         & $2$  & $2$ & $4$ & $4$ & $4$ & $5$ & $5$ & $6$  \\ 
\hline
$4$ &         & $2$  & $2$ & $4$ & $4$ & $4$ & $5$ & $5$ & $6$   \\
\hline
$3$ &         & $2$  & $2$ & $3$ & $4$ & $4$ & $4$ & $5$ & $5$ \\
\hline
$2$ &         &          & $1$ & $3$ & $3$ & $3$ & $4$ & $5$ & $5$   \\
\hline
$1$ &         &          & $1$ & $2$ & $3$ & $3$ & $4$ & $4$ & $5$ \\ 
\hline
$0$ &         &          & $1$ & $2$ & $3$ & $3$ & $4$ & $4$ & $5$ \\ 
\hline
\hline
$n \uparrow \; g \rightarrow  $  & $0$  & $1$ & $2$  & $3$ & $4$ & $5$  & $6$ & $7$ & $8$ \\  
\hline   
\end{tabular} 
\end{center}

$\\$

The first two columns, namely genus zero and one, of the table are immediate by Lemmas \ref{lem:g0} and \ref{lem:g1}.  The first row, namely closed surfaces, follows from Lemmas \ref{lem:girth} and \ref{lem:cut-set} in conjunction with the data from Note \ref{note:s} regarding the number of vertices in $(3,m)$-cages.  The genus two column of the table follows from directly considering the two isomorphism classes of two vertex three regular graphs and the consequences of adding boundary components as in subsection \ref{subsec:addingpunctures}.  

The genus three column of the table arises from considering the graph $[2]^{4},$ in LCF notation, and then the consequences of adding boundary components.  By Note \ref{note:s}, this is the only simple connected cubic graph on four vertices.  Any non-simple graph has girth at most two and hence is distance at most one from a pants decomposition containing a separating curve.  Moreover, as in Corollary \ref{cor:girth} adding a single boundary component to any non-simple graph yields a graph that is distance at most two from a pants decomposition containing a separating curve whereas adding an arbitrary number of boundary components to a non-simple graph yields a graph that is distance at most three from a pants decomposition containing a separating curve.

The genus four column of the table arises from considering the graph $[3]^{6}=K_{3,3}$, or the $(3,4)$-cage.  By Note \ref{note:s}, there is only one other simple connected cubic graph on six vertices, namely the graph $[2,3,-2]^{2}$, which has girth three.  By Corollary \ref{cor:girth}, adding any number of boundary components to the graph $[2,3,-2]^{2}$ produces pants decomposition with distance at most four from a pants decomposition containing a separating curve.  In fact, since the graph $[2,3,-2]^{2}$  has two disjoint cycles of length three, it follows that no matter how boundary components are added to the graph, for all the cases in the table, one cannot produce examples of pants decomposition graphs that are further from a pants decomposition containing a separating curve than can be produced by adding boundary components to the graph $[3]^{6}$.

The genus five column of the table arises from considering the graphs $[-3,3]^{4}$ and $[4^{8}].$  By Note \ref{note:s}, in total there are five isomorphism classes of simple cubic graphs on eight vertices to consider.  Of the five graphs, the three graphs with girth three can be ignored as two of the graphs contain a pair of disjoint cycles of length three, while the third graph contains disjoint cycles of lengths three and four.  It follows that if less than three boundary components are added to these graphs, the distance to a pants decomposition containing a separating curve is bounded above by three.  Moreover, by Corollary \ref{cor:girth}, adding any number of boundary components to such graphs produces pants decompositions which are distance at most four from a pants decomposition containing a separating curve.  Hence, we can ignore the three girth three graphs. 

The genus six column of the table arises from considering the Petersen graph, or the $(3,5)$-cage.  By Note \ref{note:s}, excluding the Petersen graph, in total there are eighteen other isomorphism classes of simple cubic graphs on ten vertices to consider.  However, thirteen of the graphs have girth three and hence by Corollary \ref{cor:girth}, adding any number of boundary components to a girth three graph produces a graph that is at most distance four from a pants decomposition containing a separating curve and hence can be ignored.  Next, direct consideration of the five remaining girth four graphs on ten vertices reveals that they each contain pairs of disjoint cycles of length four.  It follows for all the cases in the table, adding boundary components to these graphs cannot produce examples of graphs that are further from a pants decomposition containing a separating curve than graphs that can be produced by adding boundary components to the Petersen graph.  

The genus seven column of the table arises from considering the  graph $$[-4,5,-4,4,-5,4,-5,-4,4,-4,4,5]$$   By Note \ref{note:s}, in total there are 85 isomorphism classes of simple cubic graphs on twelve vertices to consider.  Of the 85 graphs, however, 63 have girth three and hence can be ignored by Corollary \ref{cor:girth}.  Amongst the remaining 22 graphs, 20 have girth four.  Eighteen of these twenty graphs have at least two disjoint cycles of length four, making it easy to see that these graphs can be ignored.  The two remaining girth four graphs have disjoint cycles of length four and five, similarly making it easy to see that these graphs can be ignored.  Finally, the remaining two girth five graphs must be considered directly.

\begin{figure}[htpb]
\centering
\includegraphics[height=3.75 cm]{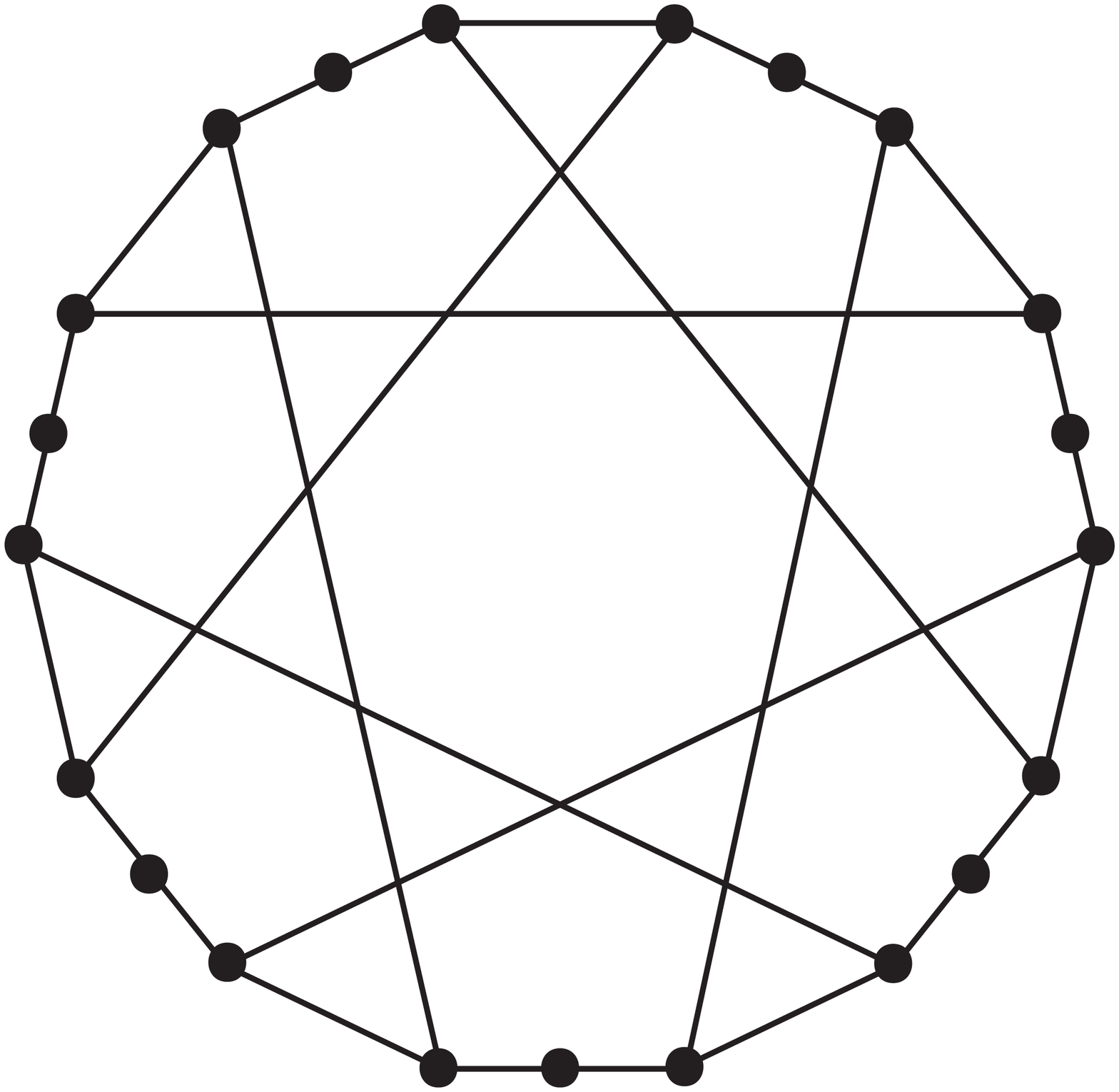}
\caption{A pants decomposition graph $\Gamma(P)$ corresponding to a pants decomposition $P \in \mathcal{P}(S_{8,7})$, obtained from adding boundary components to the Heawood graph.  The pants decomposition $P$ is distance six from any pants decomposition containing a separating curve.  } \label{fig:heawood}
\end{figure}

The genus eight column of the table arises from considering the so called Heawood graph, or the $(3,6)$-cage.  By Note \ref{note:s}, in total there are 509 isomorphism classes of simple cubic graphs on fourteen vertices to consider.  However, by Corollary \ref{cor:girth}, the graphs we only need to consider those that have girth at least five.  There are only nine such graphs to consider.  Of the nine graphs, the eight graphs excluding the Heawood graph have disjoint cycles of length four, making it easy to see that these graphs cannot produce examples of graphs that are further from a pants decomposition containing a separating curve than graphs that can be produced by adding boundary components to the Heawood graph.  See Figure \ref{fig:heawood} for a pants decomposition graph corresponding to a pants decomposition of $S_{8,7}$ which is distance six from a pants decomposition containing a separating curve.
 

\end{document}